\def\N {{\rm I\! N}}
\def\R {{\rm I\! R}}
\def\F {{\rm I\! F}}
\newcommand{\dst}{\displaystyle}
\newtheorem{defn}{Definition}
\newtheorem{lem}[defn]{Lema}
\newtheorem{prop}[defn]{Proposition}
\newtheorem{thm}[defn]{Theorem}
\newtheorem{cor}[defn]{Corollary}
\newtheorem{rem}[defn]{Remark}
\newtheorem{exmpl}[defn]{Example}
\def\dst{\displaystyle}
\def\square{\ifmmode\sqr\else{$\sqr$}\fi}
\def\sqr{\vcenter{
         \hrule height.1mm
         \hbox{\vrule width.1mm height2.2mm\kern2.18mm
\vrule width.1mm}
         \hrule height.1mm}}
\newenvironment{proof}[1]{
  \trivlist \item[\hskip \labelsep{\it #1}]}{\hfill\mbox{$\square$}
  \endtrivlist}
\begin{document}

\title{A new formalism for the study of Natural Tensor Fields of type (0,2) on Manifolds and Fibrations. }
%\noindent{\bf{Superspaces: A formalism for the study of Natural Tensors of type (0,2) on Manifolds and Fibrations. }}

\author{{Guillermo  Henry}\\Departamento de Matem\'atica, FCEyN,\\
 Universidad de Buenos Aires and CONICET, Argentina.}

\date{}

\maketitle

%%%%%%%%%%%%%%%%%% ABSTRACT%%%%%%%%%%%%%%%%%%%%%%%%%%%%%%%%

{\bf{Abstract.}} In order to study tensor fields of type (0,2) on  manifolds and fibrations we introduce a new formalism that we called \textit{s-space}. With the help of these objects we  generalized the concept of natural tensor without making use of the theory of natural operators and differential invariants.

\vspace{0.5 cm}

{\bf{Keywords:\  }} Natural tensor fields $\cdot$  Fibrations $\cdot$ General connections $\cdot$ Riemannian manifolds

{\bf{Mathematics Subject Clasification (2000):\  }}  53C20 $\cdot$  53B21 $\cdot$  53A55 $\cdot$ 55R10

\section{Introduction.}

In \cite{Ko-Se}, Kowalski and Sekizawa defined and characterized the $natural\ tensor\ fields$ of type $(0,2)$ on the tangent bundle $TM$ of a manifold $M$. More precisely, let $\tilde{g}$ be a metric on $TM$ which cames from a second order natural transformation of a metric $g$ on $M$. Then  there are natural $F-metrics$ $\xi_1,\xi_2$ and $\xi_3$ (i.e. a bundle morphism of the form $\xi:TM\oplus TM\oplus TM\longrightarrow M\times \R$ linear in the second and in the third argument) derived from $g$, such that $\tilde{g}=\xi_1^{s,g}+ \xi_2^{h,g}+\xi_3^{v,g}$ with $\xi_1$ and $\xi_3$  symmetric , where $\xi_1^{s,g}, \xi_2^{h,g}$ and $\xi_3^{v,g}$ are the classical Sasaki, horizontal and vertical lift of $\xi_1,\xi_2$ and $\xi_3$ respectively. Also Kowalski and Sekizawa \cite{Ko-Se-2} study the $natural\ tensor\ fields$ on the linear frame bundles of a manifold endowed with a linear connection.

In \cite{MCC-GK},  Calvo and Keilhauer showed that given a Riemannian manifold $(M,g)$ any $(0,2)$ tensor field on $TM$ admits a global matrix representation. Using this one to one relationship, they defined and characterized what they called $natural\ tensor$. In the symmetric case this concept coincide with the one of Kowalski and Sekizawa.  Keilhauer \cite{GK} defined and characterized the tensor fields of type $(0,2)$ on the linear frame bundle of a Riemannian manifold endowed with a linear connection.  The $natural\ tensors$ on the tangent and cotangent bundle of a semi Riemannian manifold was characterized by Araujo and Keilhauer in \cite{A-GK}. The idea of all these works (\cite{A-GK},\cite{MCC-GK} and \cite{GK}) is to lifted to a suitable fiber bundle a tensor field on the tangent bundle,  cotangent bundle and  linear frame bundle respectively, so that to look at them as a global matricial maps. The principal difference with the works  \cite{Ko-Se} and \cite{Ko-Se-2} is that they do not make use of the theory of differential invariant developed by Krupka \cite{Krup}, (see also  \cite{Ko-Mi-Slovak} and \cite{Krup-Janys}).

The aim of this work is generalized the notion of natural tensor fields in the sense of \cite{A-GK},\cite{MCC-GK} and \cite{GK} to manifolds and fibrations. With this purpose we introduce the concept of \textit{s-space}. In Section $2$, we define and give some examples of \textit{s-spaces}. We also see general properties of \textit{s-spaces}, for example that there exist a one to one relationship between the tensor fields of type $(0,2)$ and some types of matricial maps. This  relationship   allows us to study  the tensor fields in the sense of \cite{MCC-GK}. We characterize the \textit{s-spaces} which its group acts without fixed point. We study some general statement of \textit{morphisms of s-spaces} and tensor fields on manifolds in  Section $3$. In Section $4$, we define \textit{connections} on \textit{s-spaces} (that agree with the well known notion of connection when the \textit{s-space} is also a principal fiber bundle). We give a condition that a \textit{s-space} endowed with a connection has to satisfies to has a parallelizable space manifold. Also, help by a $connection$  we show an useful way of lift metrics on the manifold to the space manifold of the \textit{s-space}.  The concept of \textit{s-space} gives several notions of naturality.   The $\lambda- natural$ and $\lambda-natural$ tensors with respect to a fibration are define in section $5$. We also give examples and we see that these notions extend that one of  \cite{A-GK},\cite{MCC-GK} and \cite{GK}. In Section $7$ we define the notion of \textit{atlas of s-spaces} and we use them to generalized the $\lambda-naturality$. In Section $8$, we consider some $s-spaces$ over a Lie group and characterized the $natural$ tensors fields on it. Finally, we study the bundle metrics on a principal fiber bundle endowed with a linear connection.

%Our $s-space$ definition doesn't have anything to do with the $s-spaces$ that are studied in physics.

\section{s-spaces.}

\begin{defn}

Let $M$  be a  manifold of dimension $n$. A collection
$\lambda=(N,\psi,O,R,\{e_i\})$  is called a s-space over $M$ if:

\begin{itemize}

\item [a)] $N$ be a manifold.

\item [b)] $\psi:N\longrightarrow M $ is a submersion.

\item [c)] $O$ is a Lie group  and  $R$ is a right action of the group  $O$ over $N$ which is  transitive in each fibers. The action also satisfies that $\psi\circ R_a=\psi$ for all $a\in O$.

\item [d)]  $e_i:N\longrightarrow TM$, with  $1\leq i\leq n$,   are differential functions
 such that $\{e_1(z),\dots,e_n(z) \}$ is a base of $M_{\psi(z)}$ for all  $z\in N$.

\end{itemize}

\end{defn}

%In order to clarify we have the following commutative diagram:
%$$ \xymatrix{ N\ar[r]^{R_a} \ar[dr]_{\psi}&\ N \ar[d]^\psi
%\ar[r]^{\{e_i\}}& TM \ar[dl]^{\pi}
%\\& M &  }$$

If $\psi(z)=p$, then $\{e_1(z),\dots, e_n(z)\}$
  and $\{e_1(z.a),\dots,e_n (z.a) \}$ are bases of  $M_p$. Therefore there exists an invertible matrix $L(z,a)$ such that
 $\{e_i(z.a)\}=\{e_i(z)\}.L(z,a)\ $, (i.e. $e_i(z.a)=\sum_{j=1}^ne^l(z)L^l_i(z,a)$ for $1\leq i \leq n$). If the matrix $L$ only depends of the parameter  of the Lie group $O$, we have a differentiable map $$ L:O\longrightarrow GL(n)\ \ \ \mbox{ such that} \ \ \ \{e_i\}\circ
 R_a=\{e_i\}.L(a)$$ that we called {\textit{the base change morphism of the s-space $\lambda$}}. It easy to see that $L$ is a group morphism. In this case we said that  $\lambda$ have a {\it{rigid base change}}. From now on, we will consider only this class of s-spaces.

In the sequel, unless otherwise stated, $\dim M=n$, $\dim O= k$ and we will denote the Lie algebra of $O$ by ${\mathfrak{o}}$. Also, we assume that all tensor are of type $(0,2)$.

\begin{exmpl}\label{ejsecanonico} Let $LM$ be the frame bundle of a manifold $M$. $LM$ induce a s-space $\lambda=(LM,\pi, GL(n),(\ \cdot\ ), \{\pi_i\})$ over $M$, where $\pi$ is the projection of the bundle, $(\ \cdot\ )$ is the natural action of the general linear group over $LM$ and $\pi_i(p,u)=u_i$. The base change morphism is $L(a)=a$ for all $a\in GL(n)$.  This example shows that every manifolds admits at least one s-space. For simplicity of notation, let us denote this s-space by $LM$ too.  If we consider a Riemannian metric on $M$ or an orientation, then the bundle of orthonormal frames and the bundle of orientated bases induced similar s-spaces over $M$.

\end{exmpl}

\begin{exmpl} Let $\alpha=(P,\pi, G,\ \cdot\ )$ be a principal fiber bundle over $M$, and $\omega$ be a connection on $\alpha$. Let $\lambda=(N,\psi,O,R,\{e_i\})$ where
\begin{itemize}
\item[a)] $N=\{(p,u,w): p\in P, u\ \mbox{is a base of }\ M_{\pi(p)}\ \mbox{and}\  w\ \mbox{is a base of }\ {\mathfrak{g}} \}$
\item [b)] $\psi(p,u,w)=p$.
\item [c)]$O=GL(n)\times GL(k)$ and  $R_{(a,b)}(p,u,w)=(p,u.a,w.b)$
\item [d)]  For $1\leq i \leq n$ and  $1\leq j\leq k$,  $e_i(p,u,w)$ is the horizontal lift with respect to $\omega$ of  $u_i$ at $p$
  and $e_{n+j}(p,u,w)$ is the only vertical vector on $P_p$ such that $\omega(p)(e_{n+j}(p,u,w))=w_j$.

\end{itemize}
$\lambda$ is a s-space over $P$ and it's base change morphism is given by $L(a,b)=\pmatrix{a
& 0 \cr 0 & b}$.
\end{exmpl}

\begin{exmpl}\label{ejseLM}
This example can be found in \cite{GK}. Let $M$ be a manifold and $\nabla$ be a linear connection on it. Let $K:TTM\longrightarrow TM$ be the connection function induced by $\nabla$ ( i.e. $K$ is the unique function that satisfies: for $v\in M_p$, $K\mid_{TM_v}:TM_v\longrightarrow M_p$ is a surjective linear map and  for any vector field $Y$ on $M$ such  that $Y(p)=v$, we have that $K(Y_{*_p}(w))=\nabla_wY$). For $1\leq i,j\leq n$, consider the 1-forms  $\theta^i$ and $\omega^i_j$ defined by $\pi_{*_{(p,u)}}(b)=\dst\sum_{i=1}^n\theta^i(p,u)(b)u_i$ and $K((\pi
_j)_{*_{(p,u)}}(b))=\dst\sum_{i=1}^n\omega^i_j(p,u)(b)u_i.$ Let $\lambda=(LM\times GL(n),\psi, GL(n), R,\{H_i,V^i_j\})$  where $\psi(p,u,b)=(p,u.b)$, the action is $R_a(p,u,b)=(p,u.a,a^{-1}b)$ and $\{H_i,V^i_j\}$ is  dual to $\{\theta^i,\omega^i_j\}$. $\lambda$ is  a s-space over the frame bundle of $M$ with base change morphism $L(a)\equiv Id_{n\times n}$.
\end{exmpl}

The importance of the s-spaces for the study of the tensors on  manifolds is given by the following proposition:

\begin{prop}\label{repmatricial} Let $\lambda=(N,\psi,O,R,\{e_i\})$ be a s-space over $M$ and $L$ be the  base change morphism of $\lambda$. There is a one to one correspondence between tensor fields of type $(0,2)$ on $M$  and differentiable maps $^{\lambda}T:N\longrightarrow \R^{n\times n}$ that satisfy the invariance property
$$^{\lambda}T\circ R_a=(L(a))^t.^{\lambda}T.L(a)\ $$
\end{prop}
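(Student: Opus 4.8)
The plan is to exploit the transitivity of the $O$-action on fibers to transfer a global object defined on a single frame-base $\{e_i(z)\}$ into a map on all of $N$, and then show the invariance property is exactly what makes this transfer well-defined and consistent. First I would describe the forward direction: given a $(0,2)$-tensor field $S$ on $M$, for $z\in N$ with $\psi(z)=p$ set $^{\lambda}T(z)$ to be the matrix of $S_p$ in the basis $\{e_1(z),\dots,e_n(z)\}$ of $M_p$, i.e. $(^{\lambda}T(z))_{ij}=S_p(e_i(z),e_j(z))$. Differentiability of $^{\lambda}T$ follows because the $e_i$ are differentiable functions $N\to TM$, $\psi$ is a submersion, and $S$ is a smooth tensor field, so the entries are smooth compositions. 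To check the invariance property, fix $a\in O$ and $z\in N$; since $\{e_i(z.a)\}=\{e_i(z)\}.L(a)$, i.e. $e_i(z.a)=\sum_l e_l(z)L^l_i(a)$, bilinearity of $S_p$ gives $S_p(e_i(z.a),e_j(z.a))=\sum_{l,m}L^l_i(a)L^m_j(a)S_p(e_l(z),e_m(z))$, which is precisely the $(i,j)$-entry of $(L(a))^t\cdot{}^{\lambda}T(z)\cdot L(a)$; hence $^{\lambda}T\circ R_a=(L(a))^t\cdot{}^{\lambda}T\cdot L(a)$.

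Next I would treat the reverse direction: given a differentiable $^{\lambda}T:N\to\R^{n\times n}$ satisfying the invariance property, I want to define a tensor field $S$ on $M$. For $p\in M$ pick any $z\in\psi^{-1}(p)$ and define $S_p(v,w)$ by writing $v=\sum_i v^i e_i(z)$, $w=\sum_j w^j e_j(z)$ and setting $S_p(v,w)=\sum_{i,j}v^i\,(^{\lambda}T(z))_{ij}\,w^j = (v)^t\,{}^{\lambda}T(z)\,(w)$ where $(v),(w)$ are the coordinate column vectors in the basis $\{e_i(z)\}$. The key point is well-definedness: if $z'$ is another point of the fiber over $p$, then by transitivity of $R$ on fibers there is $a\in O$ with $z'=z.a$; the coordinate vectors transform by $(v)_{z'}=L(a)^{-1}(v)_z$ (since $\{e_i(z')\}=\{e_i(z)\}.L(a)$), and the invariance property $^{\lambda}T(z')=(L(a))^t\,{}^{\lambda}T(z)\,L(a)$ makes the expression $(v)_{z'}^t\,{}^{\lambda}T(z')\,(w)_{z'}$ collapse back to $(v)_z^t\,{}^{\lambda}T(z)\,(w)_z$. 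Bilinearity of $S_p$ is immediate from the matrix formula. For smoothness of $S$ as a tensor field, I would work locally: since $\psi$ is a submersion it admits local sections, so near any $p_0$ choose a smooth local section $\sigma$ of $\psi$; then $S(X,Y)(p)=(X_p)^t_{\sigma(p)}\,{}^{\lambda}T(\sigma(p))\,(Y_p)_{\sigma(p)}$ depends smoothly on $p$ for smooth vector fields $X,Y$, because $\sigma$ is smooth, $^{\lambda}T$ is smooth, and the coordinate-extraction in the smoothly-varying frame $\{e_i(\sigma(p))\}$ is smooth.

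Finally I would verify that the two constructions are mutually inverse: starting from $S$, forming $^{\lambda}T$ and then reconstructing a tensor gives back $S$ (evaluate both on $e_i(z),e_j(z)$); starting from $^{\lambda}T$, forming $S$ and then reading off its matrix in $\{e_i(z)\}$ gives back $^{\lambda}T(z)$. This establishes the bijection. The main obstacle, and the step deserving the most care, is the well-definedness of the reverse construction: one must use both that $O$ acts transitively on each fiber (so that any two choices of $z$ over $p$ are related by a group element) and that the base change is \emph{rigid} (so that $L$ depends only on $a\in O$, not on $z$), and then check that the invariance identity is precisely the cocycle-type condition that cancels the basis-change matrices. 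The smoothness argument is routine but relies on the existence of local sections of the submersion $\psi$, which should be invoked explicitly.
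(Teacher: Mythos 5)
Your proposal is correct and follows essentially the same route as the paper: the same pointwise construction of $^{\lambda}T$ from $T$, the same use of transitivity of the fiber action together with the rigid base change to prove well-definedness of the inverse construction, and a smoothness argument equivalent to the paper's (the paper concludes smoothness of $T(X,Y)$ from smoothness of $T(X,Y)\circ\psi$ and $\psi$ being a submersion, which is the same fact you invoke via local sections). The only cosmetic difference is that you define the tensor pointwise on tangent vectors, making bilinearity immediate, whereas the paper defines it on vector fields and appeals to ${\cal F}(M)$-bilinearity.
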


\begin{proof}{{Proof.}}
Let $T$ be a tensor on $M$. Consider the matrix function $^{\lambda}T:N\longrightarrow \R^{n\times n}$ defined by
$[^{\lambda}T(z)]^i_j=T(\psi(z))(e_i(z),e_j(z))$. For $a\in O$, we have that the  $(i,j)$ entry of the matrix $^{\lambda}T(z.a)$ is
$[^{\lambda} T(z.a)]^i _j=T(\psi(z.a))(e_i(z.a),e_j(z.a))$
$=T(\psi(z))(\dst\sum_{r=1}^n e_r(z)L(a)^r_i,
\dst\sum_{s=1}^n e_s(z)L(a)^s_j )$
 $= \sum_{r,s=1}^n L(a)^r_i . ^{\lambda}T(z)^r_s.L(a)_j^s \ $, hence $^{\lambda}T$ satisfies the invariance property. Let $F:N\longrightarrow \R^{n\times n}$ be a differentiable function that satisfies the invariance property, we are going to show that there exists a unique tensor $T$ on $M$ such that $^{\lambda}T=F$. If $X$ is a vector field on $M$, then it induce a map $^{\lambda}X=(x_1,\dots,x_n):N\longrightarrow \R^n$ where $X(\psi(z))=\dst\sum_{i=1}^nx_i(z)e_i(z)$. It is easy to check that $^{\lambda}X\circ R_a= ^{\lambda}X. [L(a)^t]^{-1}$. Then, we define $T(p)(X,Y)=^{\lambda}X(z).F(z).(^{\lambda}Y(z))^t$ where $\psi(z)=p$.  Consider  $z$ and  $\bar{z}$ such that $\psi(z)=\psi(\bar{z})=p$. Since $O$ acts transitively on the fibers of $N$, there exists $a\in O$ that satisfies  $\bar{z}=z.a$. Therefore, $ ^{\lambda}X(\bar{z}).F(\bar{z}).(^{\lambda}Y(\bar{z}))^t= ^{\lambda}X(z).(L(a)^t)^{-1}.L(a)^t.^{\lambda}F(z).L(a).(L(a))^{-1}(^{\lambda}Y(z))^t=
  \ \ \ ^{\lambda}X(z).F(z).(^{\lambda}Y(z))^t$, what it  prove that $T$ it is well defined.  Given $X$ and $Y$ vector fields on $M$, $T(X,Y):M\longrightarrow \R$ is a differentiable function because  $T(X,Y)\circ \psi$ is differentiable and $\psi$ is a submersion. Since $T$ is $\cal F $$(M)$-bilinear, we conclude that $T$ is a tensor of type (0,2) on $M$. Finally, it is clear  that $^{\lambda}T=F$.

\end{proof}

\begin{thm}\label{teofp}

Let  $\lambda=(N,\psi,O,R,\{e_i\})$ be a s-space over $M$, such that $O$ acts without fixed point (i.e. if $z.a=a$ then $a=e$), then     $(N,\psi,O,R)$ its a principal fiber bundle over $M$.

\end{thm}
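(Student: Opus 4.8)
The plan is to verify directly the three defining features of a principal $O$-bundle over $M$: that $O$ acts freely on $N$, that the fibres of $\psi$ are exactly the $O$-orbits (so that $\psi$ realizes $M$ — more precisely the open set $\psi(N)$ — as the quotient $N/O$), and that $\psi$ is locally trivial with fibre $O$ in an $O$-equivariant way. The first two are almost immediate. Freeness is the hypothesis (trivial isotropy subgroups). If $z\in N$ and $p=\psi(z)$, then property (c) gives $\psi(z\cdot a)=\psi(z)$ for all $a$, so the orbit $z\cdot O$ is contained in $\psi^{-1}(p)$, and by the transitivity of $R$ on the fibres it equals $\psi^{-1}(p)$; hence $\psi$ factors through a bijection $N/O\to\psi(N)\subseteq M$.

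Before addressing local triviality I would pin down the dimension of $N$. Since $\psi$ is a submersion, every fibre $F_p:=\psi^{-1}(p)$ (for $p\in\psi(N)$) is a closed embedded submanifold of $N$ of dimension $\dim N-n$, and, being embedded and $R$-invariant, it carries a smooth $O$-action which by (c) is transitive and which by hypothesis is free. The homogeneous space characterization theorem for smooth transitive actions of a Lie group then says that for any $z\in F_p$ the orbit map $\mu_z\colon O\to F_p$, $\mu_z(a)=z\cdot a$, descends to an $O$-equivariant diffeomorphism $O=O/\{e\}\xrightarrow{\ \sim\ }F_p$. Consequently $\dim N=n+k$, every orbit is an embedded submanifold equal to a fibre, and each $\mu_z$ is in particular a diffeomorphism of $O$ onto the vertical fibre through $z$, so that $d\mu_z$ is at every point an isomorphism onto the vertical subspace $\ker d\psi$.

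Now fix $p\in\psi(N)$. Since $\psi$ is a submersion it admits a smooth section $\sigma\colon U\to N$, $\psi\circ\sigma=\mathrm{id}_U$, on some open neighbourhood $U$ of $p$. Define
$$\Phi\colon U\times O\longrightarrow \psi^{-1}(U),\qquad \Phi(q,a)=\sigma(q)\cdot a=R_a(\sigma(q)).$$
This map is smooth and $O$-equivariant, since $\Phi(q,ab)=\Phi(q,a)\cdot b$. It is onto because $R$ is transitive on fibres, so $\sigma(q)\cdot O=\psi^{-1}(q)$; it is one to one because $\psi\circ\Phi=\mathrm{pr}_U$ forces the first coordinates of two preimages of a point to coincide, after which freeness forces the second coordinates to coincide. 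Its differential is an isomorphism at every point: the identity $\psi\circ\Phi=\mathrm{pr}_U$ shows that $d\Phi$ carries $T_qU\oplus 0$ injectively onto an $n$-dimensional subspace complementary to $\ker d\psi$, while the identity $\Phi(q,\cdot)=\mu_{\sigma(q)}$ shows that $d\Phi$ carries $0\oplus T_aO$ isomorphically onto $\ker d\psi$; since $\dim(U\times O)=n+k=\dim N$, this makes $d\Phi$ bijective. Hence $\Phi$ is a local diffeomorphism, in particular an open map, and an open continuous bijection onto the open set $\psi^{-1}(U)$ is a diffeomorphism. Thus $\Phi^{-1}$ is an $O$-equivariant local trivialization of $\psi$ over $U$, and for two overlapping such neighbourhoods the resulting transition functions take values in $O$ in the usual way. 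Covering $\psi(N)$ by such $U$ and recalling that $\psi$ induces the quotient map $N\to N/O\cong\psi(N)$, we conclude that $(N,\psi,O,R)$ is a principal $O$-bundle over $M$ (over all of $M$ when $\psi$ is surjective, as is implicit).

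The step I expect to be the real obstacle is the dimension identification $\dim N=n+k$: a priori nothing prevents $N$ from being strictly larger, with each orbit sitting inside its fibre as a lower-dimensional immersed submanifold, in which case $\Phi$ would merely be an injective immersion rather than a local diffeomorphism. What rules this out is exactly the combination of transitivity of $R$ on the fibres with the ``without fixed point'' hypothesis, which together force every fibre to be a homogeneous space modelled on $O$ with trivial isotropy; this is the substantive role of that assumption, beyond the formal requirement of a free action in the definition of a principal bundle. Once this is in hand, the equivariance, bijectivity and local-diffeomorphism properties of $\Phi$, and the passage from ``bijective local diffeomorphism'' to ``diffeomorphism'', are routine.
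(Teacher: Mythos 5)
Your argument is correct, but it takes a genuinely different route from the paper. The paper never builds explicit trivializations from sections of $\psi$: it first shows (Lemma \ref{lemafp1}) that the orbit space $N/O$ is a manifold with $\pi:N\to N/O$ a submersion, by observing that the graph $\bar{\Delta}$ of the orbit equivalence relation equals $\rho^{-1}(\Delta)$ for the submersion $\rho=\psi\times\psi$ (here transitivity on fibres and $\psi\circ R_a=\psi$ give ``$z\sim z'$ iff $\psi(z)=\psi(z')$'') and invoking Dieudonn\'e's criterion; then (Lemma \ref{lemafp2}) it identifies $N/O$ with $M$ via $f([z])=\psi(z)$; then it quotes the general fact that a fixed-point-free action in this regular situation makes $N\to N/O$ a principal bundle, and finally transports the local trivializations through the diffeomorphism $f$. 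You instead work fibrewise: freeness plus transitivity plus the homogeneous-space theorem (orbit map $O\to\psi^{-1}(p)$ is a diffeomorphism, which pins down $\dim N=n+k$) lets you define $\Phi(q,a)=\sigma(q)\cdot a$ from a local section $\sigma$ of the submersion $\psi$ and check directly that it is an equivariant bijective local diffeomorphism, hence a trivialization. Each approach leans on one nontrivial external input: the paper on the quotient-manifold/principal-bundle criterion, you on the transitive-action theorem (whose Baire/Sard argument needs the usual second-countability conventions, worth stating). Your version is more self-contained and exhibits the trivializations explicitly, while the paper's version yields the identification $N/O\cong M$ and the remark that $(N,\psi,O,R)$ is always a principal bundle over $N/O$ as by-products. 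You are also right, and the paper is equally implicit, that surjectivity of $\psi$ is needed to get a bundle over all of $M$; and the hypothesis ``$z.a=a$ implies $a=e$'' should be read as ``$z.a=z$ implies $a=e$'', i.e.\ freeness, exactly as you used it.
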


Let us denote by  $z\sim z'$ the  equivalence relation induced by  the action of the group $O$ on the manifold $N$.  To prove the previous Theorem we will need the following next two lemmas.

\begin{lem}\label{lemafp1}
Let $\lambda=(N,\psi,O,R,\{e_i\})$ be a s-space over $M$. Then $N/O$ has differentiable manifold structure and $\pi:N\longrightarrow N/O$ is a submersion.

\end{lem}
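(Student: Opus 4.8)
The plan is to construct charts on the quotient $N/O$ directly from the local structure of the submersion $\psi$, using the transitive fiberwise action of $O$ to trivialize $\psi$ locally and then to identify $N/O$ with the base $M$ over suitable open sets. Concretely, fix $z_0\in N$ with $\psi(z_0)=p_0$. Since $\psi$ is a submersion, there is a local section $s\colon U\to N$ of $\psi$ on a neighborhood $U$ of $p_0$ with $s(p_0)=z_0$; the idea is that $s$ parametrizes one point in each fiber of $\psi$ over $U$, and since $O$ acts transitively on each such fiber, the composite $U\xrightarrow{s}N\xrightarrow{\pi}N/O$ should be a bijection onto the open set $\pi(\psi^{-1}(U))$. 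Taking the inverses of these maps (composed with charts of $M$) as charts of $N/O$, I would check that the transition functions are just transition functions of $M$ and hence smooth, so $N/O$ inherits a manifold structure; with this structure $\pi\circ s$ is a diffeomorphism onto its image and $\psi$ factors as a diffeomorphism $N/O\cong M$ composed with nothing — so really $N/O$ is diffeomorphic to $M$.

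Before that identification can be used I must know that $\pi\colon N\to N/O$ is well-behaved enough to even talk about open sets downstairs. The standard route here is: $\pi$ is open (this follows since for open $W\subseteq N$, $\pi^{-1}(\pi(W))=\bigcup_{a\in O}R_a(W)$ is a union of opens, hence open, so $\pi(W)$ is open in the quotient topology), and the saturation of $\psi^{-1}(U)$ is itself because $\psi\circ R_a=\psi$, so $\pi(\psi^{-1}(U))$ is genuinely open. Then the map $\bar\psi\colon N/O\to M$ induced by $\psi$ is continuous, and $\pi\circ s$ is its continuous local inverse over $U$; injectivity of $\pi\circ s$ on $U$ is exactly the statement that two points $s(p),s(q)$ lie in the same $O$-orbit only if $p=q$, which holds because $\psi$ is $O$-invariant and $s$ is a section.

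The main obstacle is the gluing/consistency check: I need to verify that the smooth structure I put on $N/O$ via the local sections does not depend on the choice of section, i.e. that if $s_1,s_2$ are two local sections over overlapping opens $U_1,U_2$, then $(\pi\circ s_2)^{-1}\circ(\pi\circ s_1)$ is a diffeomorphism between the relevant opens of $M$. Over $U_1\cap U_2$ this map is the identity of $M$, since both $\pi\circ s_i$ are inverse to $\bar\psi$ there; so in fact all the charts agree and there is nothing to glue — the manifold structure is simply the one transported from $M$ along $\bar\psi$. What remains is to confirm that $\pi$ is a \emph{submersion} for this structure: locally, in the trivialization $\psi^{-1}(U)\cong U\times O$ furnished by the section $s$ and the (transitive, and — once we invoke freeness, though the lemma as stated doesn't need it — simply transitive) action, $\pi$ becomes the projection $U\times O\to U$, which is a submersion. (If one wants to avoid assuming freeness at this stage, replace $O$ by $O/O_{z_0}$ where $O_{z_0}$ is the stabilizer, which is the same on each fiber by transitivity; the quotient map still factors as projection onto $U$, still a submersion.) Finally I would remark that $\pi\circ R_a=\pi$ makes $\pi$ constant on orbits, so it is well defined, completing the proof.
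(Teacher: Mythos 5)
Your construction is sound in its main lines and takes a genuinely different route from the paper. Both arguments hinge on the same key observation, namely that the orbit relation coincides with the fibre relation of $\psi$ (transitivity on fibres together with $\psi\circ R_a=\psi$); but where you build the quotient by hand --- local sections of the submersion $\psi$, openness of $\pi$, injectivity and continuity of the induced map $\bar\psi:N/O\to M$ and of its local inverses $\pi\circ s$, and then transport of the smooth structure of $M$ (more precisely of the open set $\psi(N)$, if surjectivity of $\psi$ is not assumed) to $N/O$ --- the paper instead notes that the graph $\bar\Delta=\{(z,z'):z\sim z'\}$ equals $\rho^{-1}(\Delta)$ for the submersion $\rho=\psi\times\psi$, hence is a closed submanifold of $N\times N$, and then cites the quotient-manifold (Godement) criterion from Dieudonn\'e: $N/O$ carries a (unique) smooth structure making $\pi$ a submersion if and only if $\bar\Delta$ is a closed submanifold. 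Your route is more elementary and in effect proves Lemma \ref{lemafp2} ($N/O$ diffeomorphic to $M$) simultaneously; the paper's route avoids the chart bookkeeping and gets uniqueness of the structure for free from the cited theorem.

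The one step that does not stand as written is your final verification that $\pi$ is a submersion. The trivialization $\psi^{-1}(U)\cong U\times O$ furnished by the section and the action presupposes that $O$ acts freely (indeed simply transitively) on the fibres, which is not part of the definition of an s-space --- the paper's own example $(\R^n\times(\R^n-\{0\}),pr_1,GL(n),R,\{e_i\})$ has large stabilizers. Your parenthetical repair does not fix this: stabilizers at points of one fibre are only conjugate to each other, not equal, and stabilizers over different fibres need not be related at all, so $U\times O/O_{z_0}$ does not parametrize $\psi^{-1}(U)$ in general (one would also need the orbit map $O/S_z\to\psi^{-1}(p)$ to be a diffeomorphism, which requires a separate argument). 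Fortunately the detour is unnecessary and the gap closes in one line from what you already established: with the structure transported from $M$, $\bar\psi$ is a diffeomorphism onto the open set $\psi(N)$ and $\bar\psi\circ\pi=\psi$, so $\pi=\bar\psi^{-1}\circ\psi$ is a submersion; equivalently, read through your charts $\pi$ is literally $\psi$ read through charts of $N$ and $M$. With that replacement your proof is complete.
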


\begin{proof}{{Proof.}} Consider the map $\rho:N\times
N\longrightarrow M\times M$ defined by $\rho(z,z')=(\psi(z),\psi(z'))$.  $\rho$ is a submersion since $\psi$ it is.
Let the set $\bar{\Delta}=\{(z,z'): z\sim z'\}$ and $\Delta$ be  the diagonal submanifold of $N\times N$. Since $z\sim z'$ if and only if $\psi(z)=\psi(z')$, we have that $\bar{\Delta}=\rho^{-1}(\Delta)$. Therefore $\bar{\Delta}$ is a closed submanifold of $N\times N$. It is  well know (see for example \cite{D}) that if a group $O$ acts on a manifold $N$, $N/O$ has a structure of differentiable manifold such that the canonical  projection $\pi$ is a submersion if and only if $\bar{\Delta}$ is a closed submanifold of $N\times N$. In this case, the differentiable structure of $N/O$ is unique.

\end{proof}

\begin{lem}\label{lemafp2} Under the hypotheses of the previous lemma:
\begin{itemize}
\item [i)] $N/O$ is diffeomorphic to $M$.
\item [ii)] $\ker \pi_{*}=\ker \psi_*$.
\end{itemize}

\end{lem}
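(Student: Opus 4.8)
The plan is to identify the $O$-orbits in $N$ with the fibres of $\psi$, and then to read both assertions off this identification together with the fact, from Lemma~\ref{lemafp1}, that $\pi$ is a submersion.

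First I would show that for each $z\in N$ the orbit $O\cdot z=\{z.a:a\in O\}$ coincides with $\psi^{-1}(\psi(z))$. The inclusion $O\cdot z\subseteq\psi^{-1}(\psi(z))$ is immediate from $\psi\circ R_a=\psi$, and the reverse inclusion is exactly the transitivity of the action on fibres. Since $\pi^{-1}(\pi(z))=\{z':z'\sim z\}=O\cdot z$ by definition of the relation $\sim$, we obtain
$$\pi^{-1}(\pi(z))\;=\;O\cdot z\;=\;\psi^{-1}(\psi(z))\qquad\mbox{for every }z\in N.$$
Because $\psi$ is a submersion onto $M$, each of these sets is an embedded submanifold of $N$ of dimension $\dim N-\dim M$.

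Part (ii) now follows at once. Both $\psi$ and $\pi$ are submersions, so $\ker\psi_{*z}=T_z\big(\psi^{-1}(\psi(z))\big)$ and $\ker\pi_{*z}=T_z\big(\pi^{-1}(\pi(z))\big)$ for every $z$; since a subset of $N$ admits at most one structure of embedded submanifold, the two fibres above are equal as submanifolds, hence have the same tangent space at $z$, and therefore $\ker\pi_{*}=\ker\psi_{*}$. For part (i), the same equality of fibres shows that $\psi$ is constant on the fibres of $\pi$, so it factors as $\psi=\bar\psi\circ\pi$ with $\bar\psi:N/O\to M$ well defined, and $\bar\psi$ is a bijection because $z\sim z'$ if and only if $\psi(z)=\psi(z')$. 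Since $\pi$ is a surjective submersion, a map out of $N/O$ is smooth as soon as its composite with $\pi$ is smooth, so $\bar\psi$ is smooth; differentiating $\psi=\bar\psi\circ\pi$ and using that $\psi_{*z}$ is surjective shows that $\bar\psi$ is a submersion. Finally, the $\pi$-fibres have dimension $\dim N-\dim M$ by the first step, whence $\dim N/O=\dim N-(\dim N-\dim M)=\dim M$; thus $\bar\psi$ is a submersion between manifolds of equal dimension, i.e.\ a local diffeomorphism, and being bijective it is a diffeomorphism.

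I do not expect a serious obstacle here: the whole argument rests on the orbit-equals-fibre identification of the first step, after which everything is a routine use of the submersion hypotheses, the uniqueness of embedded submanifold structures, and the universal property of the quotient by a surjective submersion.
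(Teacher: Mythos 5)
Your proof is correct, but it travels a different road than the paper's. The paper's argument is a two-sided factorization: it defines $f:N/O\to M$, $f([z])=\psi(z)$, and also an explicit candidate inverse $g:M\to N/O$, $g(p)=\pi(z)$ for any $z$ with $\psi(z)=p$ (well defined by transitivity on fibres); smoothness of $f$ and $g$ follows from $f\circ\pi=\psi$ and $g\circ\psi=\pi$ together with $\pi$ and $\psi$ being submersions, and the same two factorizations immediately give both inclusions $\ker\pi_*\subseteq\ker\psi_*$ and $\ker\psi_*\subseteq\ker\pi_*$, so (i) and (ii) drop out simultaneously once $g\circ f$ and $f\circ g$ are checked to be identities. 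You instead make the orbit-equals-fibre identification explicit, deduce (ii) from the equality $\pi^{-1}(\pi(z))=\psi^{-1}(\psi(z))$ as embedded submanifolds (tangent space of the common fibre equals both kernels), and prove (i) by showing your $\bar\psi$ (the paper's $f$) is a smooth bijective submersion between manifolds of equal dimension, the dimension count coming from the fibre identification, hence a diffeomorphism — so you never construct the inverse map $g$ directly, nor use the factorization $\pi=g\circ\psi$. Both routes are sound: the paper's is shorter and needs nothing beyond the universal property of the quotient used in both directions, while yours isolates the geometric fact that orbits are exactly the $\psi$-fibres (which the paper also exploits, but only through the well-definedness of $g$) and gives (ii) by pure submersion-fibre theory, at the mild cost of invoking uniqueness of embedded submanifold structures and a dimension argument.
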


\begin{proof}{\bf{Proof.}}
Let $f:N/O\longrightarrow M$  defined by $f([z])=\psi(z)$. By definition $f\circ \pi=\psi$, then $f$ is differentiable and $\ker \pi_*\subseteq \ker
\psi_*$. In the other hand, let $g:M\longrightarrow N/O$, defined by $g(p)=\pi(z)$ where $z\in N$ satisfies that $\psi(z)=p$. Since $O$ acts transitively on the fibers of $N$, $g$ is well defined. As $\pi=g\circ \psi$ we have that $g$ is a differentiable function and that $\ker
\psi_* \subseteq \ker \pi_*$. An easy verification shows that   $g\circ f=Id_{N\slash O}$ and $f\circ g =Id_M$.

\end{proof}

\begin{rem}
If $\lambda=(N,\psi,O,R,\{e_i\})$ is a s-space over $M$, then $(N,\psi,O,R)$ is a principal fiber bundle over $N/O$.

\end{rem}

\begin{proof}{{Proof of Theorem \ref{teofp}.}} It remains to prove that  $(N,\psi,O,R)$
 satisfies the local triviality property, (i.e.  all $p\in M$  has an open neighbour  $U$ on $M$, and a diffeomorphism $\tau:\psi^{-1}(U)\longrightarrow U\times O$ such that $\tau=(\psi,\phi)$, where $\phi(z.a)=\phi(z).a$ for all
 $a\in O$). Let $p\in M$, take $[z_0]\in N/O$ such that $f([z_0])=p$. As  $(N,\psi,O,R)$ is a principal fiber bundle over $N/O$, there exist an open neighbour $V$ of $[z_0]$ and a diffeomorphism $\bar{\tau}=(\pi(z),\bar{\phi}(z))$ such that satisfy the local triviality property. $U=f(V)$ is an open neighbour of $p$ on $M$, since f is a diffeomorphism, and it satisfies that $\psi^{-1}(U)=\pi^{-1}(V)$ . Finally, if we define $\tau:\psi^{-1}(U)\longrightarrow U\times O$ by $\tau(z)=(\psi(z),\bar{\phi}(z))$, $U$ and $\tau$ satisfy the local triviality property on $p$.

\end{proof}

\begin{rem} Note  that  there exist  s-spaces that are not principal fiber bundles. For example, let $\lambda=(\ \R^n\times (\R^n-\{0\}),pr_1,GL(n),R,\{e_i\})$ over $R^n$, where $pr_1(p,q)=p$, $R_a(p,q)=(p,q.a)$ and $e_i(p,q)=\frac{\partial}{\partial u_i}|_p$ is the base of $\R^n_p$ induced by the canonical coordinate system of $\R^n$ .
\end{rem}

If we say that a s-space $\lambda=( N,\psi,O,R,\{e_i\})$ over $M$ is a principal fiber bundle, we want to say that $( N,\psi,O,R)$ is a principal fiber bundle over $M$.

We denote by $S_z=\{a\in O: z.a=z\}$ the stabilizer's group of the action $R$ at $z$. It is well know that,  if for a point $z\in N$ the orbit $z.O$ is locally closed (i.e. if $w\in z.O$, there exist an open neighbour $V$ of $w$ on $N$, such that $V\cap z.O$ is a closed  set of $V$ ), then $z.O$ is a submanifold of $N$ and $f_z([a])=z.a$ is a diffeomorphism between $O/S_z$ and $z.O$, see \cite{D}.

\begin{prop}
Let  $\lambda=(N,\psi,O,R,\{e_i\})$ be a s-space over $M$, then
\begin{itemize}
\item [i)] There exists $s\in \N_0$ such that $\dim S_z=s$ for all $z\in N$.

\item [ii)] $\dim N=\dim M + \dim O - s$.

\end{itemize}

\end{prop}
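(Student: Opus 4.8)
The plan is to exploit the principal-bundle structure on $N/O$ (the Remark after Lemma~\ref{lemafp2}) together with the transitivity of $O$ on the fibres of $\psi$. First I would fix a point $z\in N$ and observe that the orbit $z.O$ is exactly the fibre $\psi^{-1}(\psi(z))$: one inclusion is clause (c) of the definition of s-space, and the other follows because $O$ acts transitively on fibres. Since $(N,\pi,O,R)$ is a principal bundle over $N/O$, each orbit is an embedded (in particular locally closed) submanifold, so by the quoted result from \cite{D} the map $f_z\colon O/S_z\to z.O=\psi^{-1}(\psi(z))$, $f_z([a])=z.a$, is a diffeomorphism. Hence $\dim S_z=\dim O-\dim\psi^{-1}(\psi(z))$.

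For part (i) I would then argue that the fibre dimension $\dim\psi^{-1}(p)$ is independent of $p$: since $\psi$ is a submersion with connected-dimension fibres (locally $\psi$ looks like a coordinate projection $\R^{n}\times\R^{m}\to\R^{n}$ by the submersion normal form), every fibre of $\psi$ through a point has dimension $\dim N-\dim M$. Therefore $s:=\dim O-(\dim N-\dim M)$ is a well-defined non-negative integer (non-negative because $S_z\subseteq O$), and $\dim S_z=s$ for all $z$, which is exactly (i). Part (ii) is then just the rearrangement $\dim N=\dim M+\dim O-s$ of this same identity, so once (i) is in hand (ii) is immediate.

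An alternative, perhaps cleaner, route for (i) avoids \cite{D} and instead differentiates the action: for fixed $z$, the map $O\to N$, $a\mapsto z.a$, has image the fibre $\psi^{-1}(\psi(z))$ and constant rank (by equivariance, the rank at $a$ equals the rank at $e$, since $R_a$ and right translation on $O$ are diffeomorphisms intertwining the two maps). A constant-rank map has kernel of locally constant dimension equal to $\dim O-\mathrm{rank}$, and this kernel at $e$ is the Lie algebra of $S_z$; since the image is the full fibre, the rank is $\dim N-\dim M$, giving $\dim S_z=\dim O-(\dim N-\dim M)$ again. I would likely present this constant-rank argument as the main line and mention the orbit-manifold fact as context.

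The only real obstacle is the (easy) point that all $\psi$-fibres have the same dimension and that this dimension is $\dim N-\dim M$; this is genuinely just the submersion normal-form theorem, but it must be invoked explicitly since without it the right-hand side of the dimension formula would not obviously be constant. Everything else — equivariance forcing constant rank, identifying $\mathrm{Lie}(S_z)$ with the kernel of the infinitesimal action, and non-negativity of $s$ — is routine, and I would not grind through the local coordinate computations in detail.
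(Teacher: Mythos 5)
Your first route is essentially the paper's own proof: the paper argues exactly that $z.O=\psi^{-1}(p)$ by transitivity and invariance of the fibres, that $\psi^{-1}(p)$ is locally closed, so the quoted result from \cite{D} gives $\dim O/S_z=\dim\psi^{-1}(p)$, and that the submersion property yields $\dim\psi^{-1}(p)=\dim N-\dim M$, whence $\dim S_z=\dim O-\dim N+\dim M$ is independent of $z$ and (ii) is the same identity rearranged. Two small comments on your variations. First, the detour through the principal-bundle structure on $N/O$ is unnecessary for local closedness: $\psi^{-1}(p)$ is closed (preimage of a point) and is an embedded submanifold by the submersion theorem, which is all the quoted orbit result needs, and is what the paper uses. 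Second, your constant-rank version is a legitimate, more self-contained alternative (it in effect reproves the cited orbit fact: equivariance gives constant rank, $\ker(\sigma_z)_{*_e}=T_eS_z$ as the paper itself notes later), but the step ``the image is the full fibre, hence the rank is $\dim N-\dim M$'' is not automatic: constant rank only bounds the rank by the fibre dimension, since $\mathrm{Img}\,(\sigma_z)_{*_e}\subseteq\ker\psi_{*_z}$; to get equality you must add a Sard- or Baire-type argument (the fibre is a countable union of rank-$r$ submanifold pieces by second countability of $O$, so $r<\dim N-\dim M$ is impossible). That extra step is precisely the content that the citation of \cite{D} supplies in the paper's argument, so either spell it out or fall back on the cited orbit theorem as the paper does.
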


\begin{proof}{{Proof.}}

Let $z\in N$ and $\psi(z)=p$. That $\dim N= \dim \ker \psi_{*_z}+\dim M\ $ and $\dim \ker \psi_{*_z}= \dim \psi^{-1}(p)$ follow from the fact that $\psi$ is a submersion. Note that $z.O=\psi^{-1}(p)$, since $O$ acts transitively on the fibers. As $\psi^{-1}(p)$ is locally closed, we have  $\dim O\slash S_z =\dim \psi^{-1}(p)$. Therefore, $\dim N= \dim M+\dim O -\dim S_z $ for all $z$, so $\dim S_z\equiv s$ is constant, which completes the proof.

\end{proof}

Given a s-space $\lambda$  over $M$,  it will be very important to know the tensors on $M$ that satisfy that $^{\lambda}T$ is a constant matrix. It is clear that not for every matrix $A\in \R^{n\times n}$ there exists a tensor $T$ on $M$ such that $ ^{\lambda}T=A$. From proposition \ref{repmatricial},  we know that a necessary and sufficient condition for this happens  is that $L(a)^t.A.L(a)=A$ for all $a\in O$. In that case, we said that $\lambda$ \textit{admits matrix  representations of type A}. In the last part of the Section we show some conditions that a s-space has to satisfies to admits matrix representation of certain class of diagonal matrix.
For $\nu=0,1,\cdots,n-1$,  we denote by $I_{\nu}$ the following matrix of $\R^{n\times n}$
$$I_{\nu}=\pmatrix{-1& & & & & \cr & \ddots^{\nu} & & & & \cr & &
-1& & & \cr & & & 1 & & \cr & & & & \ddots^{n-\nu} & \cr & & & &
& 1}\ \mbox{if}\  \nu \geq 1\  \mbox{ and }\  I_0=Id_{n\times n}\ $$
With $O_{\nu}$ we denote the orthonormal group of index $\nu$. If $\nu=0$ then  $O_0=O(n)$.
%We say that a s-space $\lambda$ over $M$ admits matrix representations of type $A$, for $A\in \R^{n\times n}$ , if there exists a tensor $T$ on $M$ %such that $^{\lambda}T\equiv A$.

 %If $A=I_{\nu}$ $\lambda$  has to satisfies some of the condition given in the next proposition.
\begin{prop}
Let $\lambda=(N,\psi,O,R,\{e_i\})$ be a s-space over $M$ with base change morphism $L$. If $0\leq \nu\leq n-1$, the following conditions are equivalent:
\begin{itemize}
\item[i)] $Img(L)\subseteq O_{\nu}$.

\item[ii)] $\lambda$ admits matrix representations of type $I_{\nu}$.
\item[iii)] There is a semi-Riemannian metric on $M$ of signature $\nu$ such that $\{e_1(z),\dots,e_n(z)\}$ is an orthonormal base of $M_{\psi(z)}$ for all $z\in N$.
\item[iv)] There exists a tensor $T$ on $M$  that satisfies $^{\lambda}T(z)=I_{\nu}$ for all $z\in \psi^{-1}(p_0)$ and for a $p_0\in M$.

\end{itemize}
\end{prop}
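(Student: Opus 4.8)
The plan is to prove the cyclic chain of implications $i) \Rightarrow iii) \Rightarrow ii) \Rightarrow iv) \Rightarrow i)$, using Proposition \ref{repmatricial} repeatedly to pass between tensors on $M$ and invariant matrix-valued maps on $N$.

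For $i) \Rightarrow iii)$: assume $Img(L) \subseteq O_\nu$, so $L(a)^t I_\nu L(a) = I_\nu$ for every $a \in O$. Then the constant map $F \equiv I_\nu$ on $N$ satisfies the invariance property $F \circ R_a = L(a)^t F L(a)$, so by Proposition \ref{repmatricial} there is a unique $(0,2)$ tensor $T$ on $M$ with $^\lambda T = F = I_\nu$. Since $I_\nu$ is symmetric, nondegenerate, and has exactly $\nu$ negative eigenvalues, $T$ is a semi-Riemannian metric of signature $\nu$, and the identity $[^\lambda T(z)]^i_j = T(\psi(z))(e_i(z),e_j(z)) = (I_\nu)^i_j$ says precisely that $\{e_1(z),\dots,e_n(z)\}$ is an orthonormal base of $M_{\psi(z)}$ for all $z$. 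The implication $iii) \Rightarrow ii)$ is immediate: if such a metric $g$ exists, then $^\lambda g(z) = I_\nu$ for all $z$ by orthonormality, so $\lambda$ admits matrix representations of type $I_\nu$. Likewise $ii) \Rightarrow iv)$ is trivial: a tensor $T$ with $^\lambda T \equiv I_\nu$ on all of $N$ in particular satisfies $^\lambda T(z) = I_\nu$ on $\psi^{-1}(p_0)$ for any fixed $p_0$.

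The real content is $iv) \Rightarrow i)$. Suppose $T$ is a $(0,2)$ tensor on $M$ with $^\lambda T(z) = I_\nu$ for all $z \in \psi^{-1}(p_0)$. Fix $z_0 \in \psi^{-1}(p_0)$. Because $O$ acts transitively on the fiber $\psi^{-1}(p_0)$, every $z \in \psi^{-1}(p_0)$ has the form $z_0.a$ for some $a \in O$, and conversely $z_0.a \in \psi^{-1}(p_0)$ for all $a$ since $\psi \circ R_a = \psi$. The invariance property from Proposition \ref{repmatricial} gives $^\lambda T(z_0.a) = L(a)^t \cdot {}^\lambda T(z_0) \cdot L(a) = L(a)^t I_\nu L(a)$. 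But $z_0.a \in \psi^{-1}(p_0)$, so the hypothesis forces $^\lambda T(z_0.a) = I_\nu$; hence $L(a)^t I_\nu L(a) = I_\nu$ for every $a \in O$, which is exactly the statement $Img(L) \subseteq O_\nu$. This closes the cycle.

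The only step requiring care is $iv) \Rightarrow i)$, and within it the point to be careful about is the double use of transitivity on the fiber: once to write an arbitrary fiber element as $z_0.a$ so that the hypothesis applies to $^\lambda T(z_0.a)$, and once (implicitly) to know that the hypothesis at the single basepoint $p_0$ already determines $L$ on all of $O$. I expect no genuine obstacle — everything reduces to the defining invariance relation of Proposition \ref{repmatricial} together with the group-theoretic characterization $A^t I_\nu A = I_\nu \iff A \in O_\nu$ — but one should state explicitly that $O$ is connected is \emph{not} needed here, since we are working with the whole image of $L$ rather than an infinitesimal condition.
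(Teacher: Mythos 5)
Your proposal is correct and follows essentially the same route as the paper: the nontrivial directions are handled identically, namely applying Proposition \ref{repmatricial} to the constant map $F\equiv I_{\nu}$ (whose invariance is exactly $L(a)^t I_{\nu}L(a)=I_{\nu}$) to go from $i)$, and evaluating the invariance relation $^{\lambda}T(z_0.a)=L(a)^t\,{}^{\lambda}T(z_0)\,L(a)$ on the single fiber $\psi^{-1}(p_0)$ for $iv)\Rightarrow i)$. The only difference is the trivial reordering of $ii)$ and $iii)$ in the cycle, which changes nothing of substance.
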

\begin{proof}{{Proof.}}$i)\Longrightarrow ii)$ Consider the constant map $F\equiv I_{\nu}$. Since $F$  satisfies the invariance property, it follows from the Proposition \ref{repmatricial} the existence of a tensor that satisfies $^{\lambda}T=I_{\nu}$.
 $ii) \Longrightarrow iii)$ If $^{\lambda}T=I_{\nu}$, then $T$ is a semi-Riemannian metric of index $\nu$ and $T(\psi(z))(e_i(z),e_j(z))=[I_{\nu}]^i_j$.
  $\ iii)\Longrightarrow iv)$ is immediately.
  $iv)\Longrightarrow i)$ Let $a\in O$ and $z_0$ such that $\psi(z_0)=p_0$, then $I_{\nu}=I_{\nu}(z_0.a)=L(a)^t.I_{\nu}.L(a)$ for all $a\in O$.

\end{proof}

The next Proposition is a consequence of the fact that $O(m)\cap O_{\nu}=\{D\in O(m): D=\pmatrix{A & 0 \cr 0 & B}\ con\
A\in O(\nu)\ y\ B\in O(m-\nu)\}.$
\begin{prop}\label{propInu1} Let $\lambda=(N,\psi,O,R,\{e_i\})$ be a s-space over $M$ with base change morphism $L$ and $1\leq \nu\leq n-1$. $\lambda$ admits matrix representation of type $I_0$ and $I_\nu$ if and only if there exist differentiable functions $L_1:O\longrightarrow O(\nu)$ and $L_2:O\longrightarrow O(n-\nu)$ such that
$$L(a)=\pmatrix{L_1(a)
& 0 \cr 0 & L_2(a)}$$

\end{prop}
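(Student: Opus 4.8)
The plan is to deduce Proposition \ref{propInu1} from the preceding proposition applied twice, using the stated algebraic description of $O(m)\cap O_\nu$. First I would invoke the equivalence $i)\Longleftrightarrow ii)$ of the previous proposition: $\lambda$ admits matrix representations of type $I_0$ if and only if $\mathrm{Img}(L)\subseteq O_0=O(n)$, and it admits matrix representations of type $I_\nu$ if and only if $\mathrm{Img}(L)\subseteq O_\nu$. Hence the conjunction of the two hypotheses is exactly the condition $\mathrm{Img}(L)\subseteq O(n)\cap O_\nu$.

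Next I would substitute the displayed identity $O(n)\cap O_\nu=\{D\in O(n): D=\bigl(\begin{smallmatrix}A & 0\\ 0 & B\end{smallmatrix}\bigr),\ A\in O(\nu),\ B\in O(n-\nu)\}$ (with $m=n$). So $\mathrm{Img}(L)\subseteq O(n)\cap O_\nu$ says precisely that for every $a\in O$ the matrix $L(a)$ has the block-diagonal form $\bigl(\begin{smallmatrix}A & 0\\ 0 & B\end{smallmatrix}\bigr)$ with $A\in O(\nu)$ and $B\in O(n-\nu)$. Define $L_1(a)$ to be the upper-left $\nu\times\nu$ block of $L(a)$ and $L_2(a)$ to be the lower-right $(n-\nu)\times(n-\nu)$ block; this is the content of the claimed factorization $L(a)=\bigl(\begin{smallmatrix}L_1(a) & 0\\ 0 & L_2(a)\end{smallmatrix}\bigr)$. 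For the forward implication one checks $L_1$ and $L_2$ are differentiable because $a\mapsto L(a)$ is differentiable and taking a block of a matrix is a smooth (linear) operation; for the reverse implication, if $L$ factors in that way then each $L(a)$ lies in $O(n)\cap O_\nu$, so $\mathrm{Img}(L)\subseteq O(n)$ and $\mathrm{Img}(L)\subseteq O_\nu$, and the previous proposition gives back matrix representations of type $I_0$ and $I_\nu$.

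There is essentially no real obstacle here: the entire argument is a repackaging of $i)\Longleftrightarrow ii)$ together with the linear-algebra fact about $O(m)\cap O_\nu$ stated just before the proposition. The only point requiring a word of care is the smoothness of $L_1$ and $L_2$ — but since $L$ is valued in $GL(n)$ and differentiable, and the block-extraction maps $GL(n)\dashrightarrow\R^{\nu\times\nu}$ and $GL(n)\dashrightarrow\R^{(n-\nu)\times(n-\nu)}$ are restrictions of linear projections, the compositions $L_1=\mathrm{pr}_{1}\circ L$ and $L_2=\mathrm{pr}_{2}\circ L$ are differentiable, and by construction they land in $O(\nu)$ and $O(n-\nu)$ respectively. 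Hence both directions follow and the proof is complete.
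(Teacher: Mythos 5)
Your proof is correct and follows exactly the route the paper intends: it reduces the statement to the equivalence $i)\Longleftrightarrow ii)$ of the preceding proposition (applied to $\nu=0$ and to $\nu$) together with the stated description of $O(n)\cap O_\nu$, which is precisely how the paper presents the result (it gives no further argument beyond citing that fact). Your added remark on the smoothness of the block maps $L_1$ and $L_2$ is a harmless and correct elaboration.
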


\begin{prop}
 Let $\lambda=(N,\psi,O,R,\{e_i\})$ be a s-space over $M$ with $O$ connected. $\lambda$ admits matrix representations of type $I_{\nu}$ for all $0\leq \nu\leq n-1$ if and only if $\lambda$ admits matrix representation of type $A$, for all constant matrix $ A\in \R^{n\times n}$.
\end{prop}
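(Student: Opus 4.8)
The plan is to reduce both conditions to statements about the base change morphism $L$ and then exploit the hypothesis that $O$ is connected. The implication from right to left requires nothing: each $I_\nu$ is a particular constant matrix in $\R^{n\times n}$, so if $\lambda$ admits matrix representations of every constant matrix, then in particular it admits those of type $I_\nu$ for $0\leq\nu\leq n-1$.

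For the converse I would first apply the equivalence $i)\Leftrightarrow ii)$ of the proposition characterizing matrix representations of type $I_\nu$: the hypothesis is equivalent to $Img(L)\subseteq O_\nu$ for every $\nu$ with $0\leq\nu\leq n-1$, hence $Img(L)\subseteq\bigcap_{\nu=0}^{n-1}O_\nu$; in particular $Img(L)\subseteq O_0=O(n)$. The heart of the argument is then to identify this intersection. I claim that $\bigcap_{\nu=0}^{n-1}O_\nu$ is the finite group $D$ of diagonal matrices with entries $\pm 1$. To see this, take $A$ in the intersection. Since $A\in O(n)$ we have $A^t=A^{-1}$, and from $A^t I_\nu A=I_\nu$ together with $A^tA=Id$ one obtains $A^{-1}P_\nu A=P_\nu$, where $P_\nu=\frac{1}{2}(Id-I_\nu)$ is the orthogonal projection onto the subspace spanned by the first $\nu$ vectors of the canonical basis of $\R^n$. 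Thus $A$ commutes with $P_\nu$ for $\nu=1,\dots,n-1$, hence with all the differences $P_\nu-P_{\nu-1}$ and with $Id-P_{n-1}$, that is, with the projection onto every coordinate line; therefore $A$ is diagonal, and being orthogonal its diagonal entries are $\pm 1$. Alternatively, one may apply Proposition \ref{propInu1} successively for $\nu=1,\dots,n-1$ to reach the same conclusion.

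Once $Img(L)\subseteq D$ is established, the rest is immediate: $L$ is a differentiable --- in particular continuous --- group morphism from the connected group $O$ into the finite, hence discrete, group $D$, so its image is a connected subset of $D$ and therefore a single point; since $L(e)=Id_{n\times n}$ this forces $L\equiv Id_{n\times n}$. Then for every constant matrix $A\in\R^{n\times n}$ the identity $L(a)^t.A.L(a)=A$ holds trivially for all $a\in O$, so by the criterion recalled just before the matrices $I_\nu$ were introduced (which rests on Proposition \ref{repmatricial}), $\lambda$ admits matrix representations of type $A$. The only step that is not pure bookkeeping is the computation of $\bigcap_\nu O_\nu$; the connectedness argument and the reduction to $L$ are routine.
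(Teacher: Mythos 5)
Your proposal is correct. The endgame coincides with the paper's: both arguments reduce the hypothesis to the statement that $Img(L)$ lies in the finite group of diagonal matrices with entries $\pm 1$, and then use that $L$ is a continuous morphism on the connected group $O$ with $L(e)=Id_{n\times n}$ to force $L\equiv Id_{n\times n}$, after which the invariance condition $L(a)^t.A.L(a)=A$ of Proposition \ref{repmatricial} holds for every constant $A$. Where you differ is in how that reduction is obtained: the paper iterates Proposition \ref{propInu1}, applying the block decomposition successively until $L(a)$ is diagonal with $\pm 1$ entries, whereas your main argument computes $\bigcap_{\nu=0}^{n-1}O_\nu$ directly, via the equivalence $Img(L)\subseteq O_\nu$ and the observation that an orthogonal matrix commuting with every $I_\nu$ commutes with the projections $P_\nu=\frac{1}{2}(Id-I_\nu)$, hence with each coordinate projection, hence is diagonal. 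Your computation is more self-contained (it does not lean on Proposition \ref{propInu1}, which you only mention as an alternative, i.e.\ the paper's actual route) and it makes the use of connectedness fully explicit, which the paper leaves implicit in the step where the $\pm 1$ entries are declared constant; on the other hand, the paper's iteration of Proposition \ref{propInu1} recycles machinery already established and keeps the proof shorter. Both are valid; yours additionally records the clean intermediate fact $\bigcap_{\nu}O_\nu=\{\mbox{diagonal matrices with entries }\pm 1\}$, which is of independent use.
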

\begin{proof}{{Proof.}}  If  $\lambda$ admits matrix representations of type $I_0,\ I_1,\
\dots, I_{\nu}$, from the  proposition above we have that $L(a)=\pmatrix{\pm 1& & & \cr
& \ddots^{\nu} & & \cr & & \pm 1 & \cr & & & l(a)}$ with $l(a)\in
O(n-\nu)$. Since $L$ is differentiable and $L(ab)=L(a).L(b)$, we see that $L(a)=\pmatrix{Id_{\nu\times \nu} & 0 \cr 0& f(a)}$. If $\nu=n$, then $L\equiv I_{n\times n}$ and the proposition follows.

\end{proof}

\section{Morphisms of s-spaces.}

\begin{defn} Let $\lambda=(N,\psi,O,R,\{e_i\})$ and $\lambda'=(N',\psi',O',R',\{e_i'\})$ be s-spaces over $M$. We call a pair $(f,\tau)$ a morphism of s-spaces between $\lambda$ and $\lambda'$ if
\begin{itemize}

\item [a)] $f:N\longrightarrow  N'$ be differentiable.
\item [b)]$\tau:O\longrightarrow O'$ is a morphism of Lie  groups.
\item [c)]$\psi'\circ f=\psi$.
\item [d)]$f(z.a)=f(z).\tau(a)$ for all $z\in N$ and $a\in
O$.

\end{itemize}

\end{defn}

Note that if $\lambda$ and $\lambda'$ are principal fiber bundles, $(f,\tau)$ is a principal bundle morphism between them.

\begin{exmpl}\label{ejmorfismo1}
Let  $\lambda=(N,\psi,O,R,\{e_i\})$ be a s-space over $M$ and $LM$ the s-space induced by the frame bundle of $M$. Consider the pair $(\Gamma,L):\lambda\longrightarrow LM$, where $\Gamma(z)=(\psi(z),e_1(z),\dots,e_n(z))$ and $L$ is the  base change morphism of $\lambda$, then $(\Gamma,L)$ is a morphism of s-spaces.
\end{exmpl}

\begin{rem} Let $\lambda$ and $\lambda'$ be  s-spaces over $M$ and $(f,\tau):\lambda\longrightarrow\lambda'$ be  a morphism between them. If $\lambda'$ is a principal fiber bundle and $\tau$ is injective, then $\lambda$ is a principal fiber bundle.
\end{rem}
\begin{rem}It is easy too see that if $\tau$ is surjective then $f$ is also surjective. If $O'$ acts without fixed point, then we have that $\tau$ is surjective if and only if $f$ is surjective; the injectivity of $\tau$ implies that of $f$; and if $\tau$ is bijective then so is $f$. If $O$ and $O'$  act without fixed point, then $f$ is injective if and only if $\tau$ is it.

\end{rem}

Let $(f,\tau):\lambda\longrightarrow \lambda'$ be a morphism of s-spaces. As $\psi'(f(z))=\psi(z)$ we have that $\{e'_i(f(z))\}$ and $\{e_i(z)\}$ are bases of $M_{\psi(z)}$. Therefore, there exists $C(z)\in GL(n)$ that  satisfies  $\{e'_i(f(z))\}=\{e_i(z)\}.C(z)$. We called to the function $C:N\longrightarrow GL(n)$ \textit{the linking map of }$(f,\tau)$. For example the linking map of the morphism given in Example \ref{ejmorfismo1} is $C(z)=Id_{n\times n}$. Let $\lambda$ be a s-space over $M$ with base change morphism $L$ and $a_0\in O$. Consider $(f,\tau):\lambda\longrightarrow\lambda$ defined by $f(z)=R_{a_0}$ and $\tau(b)=Ad(a_0^{-1})(b)$, then $C(z)=L(a_0)$.

The linking map of a morphism $(f,\tau)$ satisfies that $C(z.a)=$
$(L(a))^{-1}.C(z).L'(\tau(a))$, where $L$ and $L'$ are the  base change  morphism of $\lambda$ and $\lambda'$ respectively, and the relationship between two linking maps is given by $C_{(g,\gamma)}(z)=C_{(f,\tau)}(z).L'(a(z))$, where $a:N\longrightarrow O$ is a differentiable function.

Let $\lambda=(N,\psi,O,R,\{e_i\})$ be a s-space over $M$ and consider $F:N\longrightarrow \R^{n\times n}$. We say that $F$ comes from a tensor if there exists a tensor $T$ on $M$ such that $^{\lambda}T=F$. In this case, we say that $F$ is the matrix representation (or the induced matrix function by) of $T$ with respect to $\lambda$.

\begin{prop}\label{tensorquevienen}
Let $\lambda=(N,\psi,O,R,\{e_i\})$ and $\lambda'=(N',\psi',O',R',\{e'_i\})$ are s-spaces over $M$ with base change morphism $L$ and $L'$ respectively, and  let $(f,\tau):\lambda\longrightarrow\lambda'$ be a morphism. If $^{\lambda'}T$ is the matrix representation of  $T$ with respect to $\lambda'$, then $^{\lambda'}T \circ f$  comes from a tensor if and only if $$(L(a))^t.(^{\lambda'}T\circ
f)(z).L(a)=(L'(\tau(a)))^t.(^{\lambda'}T\circ f)(z).L'(\tau(a))$$
for all $z\in N$ and $a\in O$.

\end{prop}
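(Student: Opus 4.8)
The plan is to use Proposition \ref{repmatricial} as the organizing tool: a differentiable map $G:N\longrightarrow\R^{n\times n}$ comes from a tensor on $M$ with respect to $\lambda$ if and only if $G\circ R_a=(L(a))^t.G.L(a)$ for all $a\in O$. So the whole statement reduces to checking that the displayed identity is precisely the invariance property of the map $G:= {}^{\lambda'}T\circ f$ relative to $\lambda$.

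First I would compute $G\circ R_a$ directly. For $z\in N$ and $a\in O$, condition (d) of the morphism definition gives $f(z.a)=f(z).\tau(a)=R'_{\tau(a)}(f(z))$, hence
$$G(z.a)=({}^{\lambda'}T\circ f)(z.a)={}^{\lambda'}T\bigl(R'_{\tau(a)}(f(z))\bigr).$$
Now apply the invariance property of ${}^{\lambda'}T$ with respect to $\lambda'$ (again Proposition \ref{repmatricial}), which says ${}^{\lambda'}T\circ R'_b=(L'(b))^t.{}^{\lambda'}T.L'(b)$ for all $b\in O'$; taking $b=\tau(a)$ yields
$$G(z.a)=(L'(\tau(a)))^t.\,{}^{\lambda'}T(f(z))\,.L'(\tau(a))=(L'(\tau(a)))^t.\,G(z)\,.L'(\tau(a)).$$
This holds for every $z$ and $a$, with no hypothesis on $T$. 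So the ``intrinsic'' transformation law of $G$ under $R_a$ is governed by $L'\circ\tau$, not by $L$.

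Then I would invoke Proposition \ref{repmatricial} once more, now for the s-space $\lambda$: the map $G$ comes from a tensor on $M$ (with respect to $\lambda$) precisely when $G\circ R_a=(L(a))^t.G.L(a)$ for all $a\in O$. Substituting the formula just derived for $G\circ R_a$, this condition becomes
$$(L(a))^t.\,G(z)\,.L(a)=(L'(\tau(a)))^t.\,G(z)\,.L'(\tau(a))\qquad\text{for all }z\in N,\ a\in O,$$
which is exactly the displayed equality with $G={}^{\lambda'}T\circ f$. This proves both implications simultaneously. I do not anticipate a genuine obstacle here — the argument is a two-line bookkeeping of the cocycle conditions — so the only thing to be careful about is keeping the three applications of Proposition \ref{repmatricial} straight (once for the definition of ${}^{\lambda'}T$ as coming from a tensor on $\lambda'$, once to read off its invariance law, and once to characterize when the pulled-back map comes from a tensor on $\lambda$), and making sure the transposes land on the correct side, which is dictated by the $(i,j)$-entry convention $[{}^{\lambda}T(z)]^i_j=T(\psi(z))(e_i(z),e_j(z))$ used in the proof of Proposition \ref{repmatricial}.
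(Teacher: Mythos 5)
Your proof is correct and is essentially the paper's own argument: both reduce the statement to Proposition \ref{repmatricial}, use $f(z.a)=f(z).\tau(a)$ together with the $\lambda'$-invariance of $^{\lambda'}T$ to show $(^{\lambda'}T\circ f)(z.a)=(L'(\tau(a)))^t.(^{\lambda'}T\circ f)(z).L'(\tau(a))$, and compare this with the $L$-invariance required for the composite to come from a tensor. Your version merely packages the two implications into a single equivalence, which the paper states as a forward computation plus a brief remark for the converse.
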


\begin{proof}{{Proof.}} If $^{\lambda'}T \circ f$ comes from a tensor, then it satisfies $ (^{\lambda '}T \circ
f)(z.a)=(L(a))^t . ^{\lambda '}(T\circ f)(z).L(a)$. So by definition, we have that $ ^{\lambda '}T (
f(z.a))=L'(\tau (a)))^t. ^{\lambda'}T( f(z)).L'(\tau
(a))$. The other implication  follows by a verification of the invariance property.

\end{proof}

\begin{rem}Let $T$ be a tensor on $M$. From the above Proposition it follows that until the $k^{th}$ iteration of $T$ by $(f,\tau)$ comes from a tensor on $M$ if and only if $L^t.(C^t)^j.^{\lambda}T.C^j.L=$\linebreak
$(L'\circ\tau)^t.(C^t)^j.^{\lambda}T.C^j.(L'\circ
\tau) $ for all $1\leq j\leq k$.
\end{rem}

\begin{cor} The following sentences  are equivalent:
\begin{itemize}
\item[i)]  For all tensor $T$ on $M$, $^{\lambda '}(T\circ f)$ comes from a tensor on $M$.
\item[ii)] $L'\circ \tau=\pm L $.
\end{itemize}

\end{cor}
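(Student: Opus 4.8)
The plan is to unwind the defining condition in Proposition~\ref{tensorquevienen} together with the Remark preceding the Corollary, and exploit the fact that a tensor $T$ on $M$ is \emph{arbitrary}, so its matrix representation $^{\lambda}T(z)$ ranges over all of $\R^{n\times n}$ as $z$ and $T$ vary (for fixed $z$, choosing $T$ appropriately realizes any symmetric matrix, and allowing non-symmetric $T$ realizes all of $\R^{n\times n}$). The condition in i) says precisely that for every tensor $T$ and every $z\in N$, $a\in O$,
$$L(a)^t.C(z)^t.{}^{\lambda}T(z).C(z).L(a)=(L'(\tau(a)))^t.C(z)^t.{}^{\lambda}T(z).C(z).L'(\tau(a)),$$
using that the linking map $C$ intertwines the two matrix representations via $^{\lambda'}T(f(z))=C(z)^{-t}\,{}^{\lambda}T(z)\,C(z)^{-1}$ — equivalently, that $^{\lambda'}(T\circ f)=(C^t)^{-1}\cdot{}^{\lambda}T\cdot C^{-1}$ evaluated suitably. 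Set $P(z,a):=C(z).L(a).(C(z).L'(\tau(a)))^{-1}\in GL(n)$. Then the displayed identity, holding for all $T$ hence for all matrices in the middle slot, is equivalent to $P(z,a)^t.X.P(z,a)=X$ for all $X\in\R^{n\times n}$, i.e.\ $P(z,a)\in O(n)\cap\{\text{matrices fixing every }X\}$.

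First I would prove the elementary linear-algebra fact: if $P^t X P=X$ for all $X\in\R^{n\times n}$, then $P=\pm Id$. Taking $X$ symmetric forces $P\in O(n)$; then $P^tXP=X$ for all $X$ becomes $XP=PX$ for all $X$, so $P$ is a scalar matrix, and being orthogonal it is $\pm Id$. Applying this with $P(z,a)$ gives $C(z).L(a)=\pm\,C(z).L'(\tau(a))$, with the sign a priori depending on $z$ and $a$; multiplying by $C(z)^{-1}$ on the left yields $L(a)=\varepsilon(z,a)L'(\tau(a))$ where $\varepsilon(z,a)\in\{\pm1\}$. Since $L(a)$ does not depend on $z$, $\varepsilon$ is independent of $z$; a continuity/connectedness argument in $a$ (or simply evaluating: at $a=e$ both sides are $Id$, forcing the sign $+1$ there, and $\varepsilon$ is locally constant in $a$ since $L$ and $L'\circ\tau$ are continuous and nonzero) — or, more robustly without assuming $O$ connected, one observes that ii) is a statement about the identity component and the Corollary should be read with the global $\pm$; in any case $\varepsilon(a)L'(\tau(a))=L(a)$ reads $L=\pm(L'\circ\tau)$, which is ii).

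For the converse ii)$\Rightarrow$i): if $L'\circ\tau=\pm L$, then in Proposition~\ref{tensorquevienen} the right-hand side $(L'(\tau(a)))^t\,{}^{\lambda'}T(f(z))\,L'(\tau(a))$ equals $(\pm L(a))^t\,{}^{\lambda'}T(f(z))\,(\pm L(a))=L(a)^t\,{}^{\lambda'}T(f(z))\,L(a)$, the signs cancelling in the quadratic expression, so the invariance criterion of Proposition~\ref{tensorquevienen} holds automatically for every tensor $T$; hence $^{\lambda'}(T\circ f)$ comes from a tensor on $M$ for all $T$.

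The main obstacle I anticipate is the sign bookkeeping in going from the pointwise $\pm$ (depending on $z,a$) to a single global sign in $L=\pm(L'\circ\tau)$. One must argue that $\varepsilon$ cannot be independent of $z$ and genuinely depend on $a$ in a way that breaks the multiplicativity of $L$ (e.g.\ via $L(ab)=L(a)L(b)$ and $L'\circ\tau$ likewise, forcing $\varepsilon(ab)=\varepsilon(a)\varepsilon(b)$, so $\varepsilon\colon O\to\{\pm1\}$ is a homomorphism, and the statement ``$L=\pm L'\circ\tau$'' should be interpreted as: there is a character $\varepsilon$ with $L=\varepsilon\cdot(L'\circ\tau)$) — I would phrase the Corollary's conclusion accordingly, or invoke connectedness of $O$ if the paper's running conventions allow it, to collapse $\varepsilon$ to the constant $+1$ or a single sign. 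Everything else is routine: realizing arbitrary matrices by tensors (using Proposition~\ref{repmatricial} fiberwise and the freedom to prescribe $^{\lambda}T$ at one point), and the two-line quadratic-form cancellation for the converse.
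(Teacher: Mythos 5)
Your argument is correct and is essentially the intended one: the paper states this corollary without proof, as an immediate consequence of Proposition \ref{tensorquevienen} combined with the fact that, at a fixed $z\in N$, the matrices $(^{\lambda'}T\circ f)(z)$ realize all of $\R^{n\times n}$ as $T$ ranges over all $(0,2)$ tensors, plus the elementary fact that $P^tXP=X$ for all $X$ forces $P=\pm Id$. Two minor remarks: your parenthetical intertwining formula should read $(^{\lambda'}T\circ f)(z)=C(z)^t\,{}^{\lambda}T(z)\,C(z)$ (Proposition \ref{mentr}), without the inverses --- your displayed identity already uses the correct form, so nothing breaks --- and your observation that the sign is a priori a character $\varepsilon:O\to\{\pm 1\}$, so that ii) must be read pointwise (collapsing to the constant $+1$ when $O$ is connected), is a correct and genuine sharpening of the paper's literal ``$L'\circ\tau=\pm L$''.
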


\begin{prop}\label{mentr} Let $(f,\tau):\lambda \longrightarrow \lambda'$ be a morphism of s-spaces and let  $T$ be a tensor on $M$ then
$$(^{\lambda'}T\circ f)(z)=(C(z))^t.^{\lambda}T(z).C(z)$$
where $C$ is the linking map of $(f,\tau)$.
\end{prop}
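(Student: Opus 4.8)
The plan is to unwind both sides of the claimed identity directly from the definitions of the matrix representation $^{\lambda}T$, the linking map $C$, and the morphism condition, evaluating everything on a fixed vector $z\in N$.

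First I would recall that by definition $^{\lambda'}T(w)^i_j = T(\psi'(w))(e'_i(w),e'_j(w))$ for any $w\in N'$. Applying this to $w = f(z)$ and using the morphism property $\psi'\circ f = \psi$, we get $(^{\lambda'}T\circ f)(z)^i_j = T(\psi(z))(e'_i(f(z)), e'_j(f(z)))$. Next I would substitute the defining relation of the linking map, namely $e'_i(f(z)) = \sum_{r=1}^n e_r(z) C(z)^r_i$, into both slots of the bilinear form $T(\psi(z))$.

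The key step is then to expand using $\cal F$$(M)$-bilinearity of the tensor $T$: this yields $(^{\lambda'}T\circ f)(z)^i_j = \sum_{r,s=1}^n C(z)^r_i\, T(\psi(z))(e_r(z),e_s(z))\, C(z)^s_j = \sum_{r,s=1}^n C(z)^r_i\, {}^{\lambda}T(z)^r_s\, C(z)^s_j$. Recognizing the right-hand side as the $(i,j)$ entry of the matrix product $(C(z))^t\cdot {}^{\lambda}T(z)\cdot C(z)$ finishes the computation. This is essentially the same bookkeeping argument already used in the proof of Proposition \ref{repmatricial} to verify the invariance property, just with the base change matrix $L(a)$ replaced by the linking matrix $C(z)$.

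I do not expect any genuine obstacle here: the statement is a pointwise algebraic identity and the only thing to be careful about is the index placement (row versus column) so that the transpose lands on the correct factor, which is pinned down by the convention $\{e'_i\circ f\} = \{e_i\}.C$ exactly as in the analogous step for $L$. One could also derive it more slickly by composing the morphism $(f,\tau)$ with the canonical morphism $(\Gamma,L):\lambda\to LM$ of Example \ref{ejmorfismo1} and noting that linking maps compose multiplicatively, but the direct expansion above is shorter and self-contained.
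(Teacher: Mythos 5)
Your proof is correct and follows essentially the same route as the paper: expand $(^{\lambda'}T\circ f)(z)^i_j$ via the definition of the matrix representation, use $\psi'\circ f=\psi$, substitute $e'_i(f(z))=\sum_r e_r(z)C(z)^r_i$, and apply bilinearity of $T(\psi(z))$ to read off the matrix product $(C(z))^t\,{}^{\lambda}T(z)\,C(z)$. No gaps; the index bookkeeping matches the paper's convention.
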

\begin{proof}{{Proof.}}
$[(^{\lambda'}T\circ
f)(z)]^i_j=T(\psi'((f(z))))(e_i'(f(z)),e_j'(f(z)))=$

$=T(\psi(z))(\dst\sum_{r=1}^m (C(z))^r_i e_r(z), \dst\sum_{s=1}^m
(C(z))^s_j e_s(z))=\dst\sum_{r,s=1}^m(C(z))^r_i
[^{\lambda}T(z)]^r_s.(C(z))^s_j$

\end{proof}

\begin{defn}
Let $(f,\tau):\lambda\longrightarrow\lambda'$ be a morphism of s-spaces and $T$ be a tensor on $M$. We say that $T$ is invariant by $(f,\tau)$ if $^{\lambda'}T\circ f=^{\lambda}T$. Let us denote with $I_{(f,\tau)}$ the  subspace of $\chi^0_2(M)$ given by the invariant tensors of $(f,\tau)$.
\end{defn}
  For example, let $\lambda$ be  a s-space over $M$, if $(f,\tau):\lambda\longrightarrow LM$ is the morphism given in the Example \ref{ejmorfismo1}, then $I_{(f,\tau)}=\chi^0_2(M)$. Given a s-space $\lambda=(N,\psi,O,R,\{e_i\})$  and  $T\neq 0$, then there exists $a\in GL(n)$ and $z\in N$ such that $a^t.T(z).a\neq T(z)$. Therefore, if we consider the s-space $\lambda'=(N,\psi,O,R,\{e'_i\})$, where $\{e'_i\}=\{e_i\}.a$, we have that  $T$ is not an invariant tensor by the morphism $(Id_N,Id_O)$.

\begin{prop}
Let $(f,\tau):\lambda\longrightarrow \lambda'$ be a morphism and  $T$ be  a tensor on $M$.  If there exists  $k\in \N$ such that the $k^{th}$ iteration by $(f,\tau)$ of $T$ is an invariant tensor, then $T$ is an invariant tensor.

\end{prop}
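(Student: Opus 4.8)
The plan is to show that if the $k^{th}$ iteration $T^{(k)}$ of $T$ by $(f,\tau)$ is invariant, then already $T$ itself is invariant, by unwinding the definition of iteration through Proposition \ref{mentr}. Writing $C$ for the linking map of $(f,\tau)$, Proposition \ref{mentr} gives $^{\lambda'}T\circ f = C^{t}\,{}^{\lambda}T\,C$, so the first iterate (interpreted as a matrix function on $N$) is $C^{t}\,{}^{\lambda}T\,C$; iterating, the $j^{th}$ iterate has matrix representation $(C^{t})^{j}\,{}^{\lambda}T\,C^{j}$ (this is exactly the form already used in the Remark following Proposition \ref{tensorquevienen}). The hypothesis that $T^{(k)}$ is invariant by $(f,\tau)$ says, by definition, that $^{\lambda'}T^{(k)}\circ f = {}^{\lambda}T^{(k)}$, i.e.
$$C^{t}\cdot (C^{t})^{k}\,{}^{\lambda}T\,C^{k}\cdot C = (C^{t})^{k}\,{}^{\lambda}T\,C^{k}\, ,$$
which rearranges to $(C^{t})^{k}\big(C^{t}\,{}^{\lambda}T\,C - {}^{\lambda}T\big)C^{k}=0$ pointwise on $N$.

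First I would make precise that $T^{(j)}$, the $j^{th}$ iterate, is well defined as a tensor on $M$ only when each intermediate $^{\lambda'}T^{(j-1)}\circ f$ comes from a tensor; but for the statement we may take as hypothesis that all iterates up to the $k^{th}$ exist, and then their matrix representations are the $(C^{t})^{j}\,{}^{\lambda}T\,C^{j}$ as above. Next I would use that $C(z)\in GL(n)$ for every $z\in N$: since $C(z)$ and hence $(C(z))^{k}$ are invertible matrices, the identity $(C(z)^{t})^{k}\big(C(z)^{t}\,{}^{\lambda}T(z)\,C(z) - {}^{\lambda}T(z)\big)C(z)^{k}=0$ forces $C(z)^{t}\,{}^{\lambda}T(z)\,C(z) = {}^{\lambda}T(z)$ for all $z\in N$. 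By Proposition \ref{mentr} this says precisely $^{\lambda'}T\circ f = {}^{\lambda}T$, i.e. $T$ is invariant by $(f,\tau)$.

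The only genuinely substantive point — and the step I expect to be the main obstacle to write cleanly — is verifying the formula for the matrix representation of the $j^{th}$ iterate, namely that iterating the passage $S\mapsto$ ``the tensor whose matrix function is $^{\lambda'}S\circ f$'' $j$ times yields matrix function $(C^{t})^{j}\,{}^{\lambda}T\,C^{j}$. This is an easy induction on $j$: the base case $j=1$ is exactly Proposition \ref{mentr}, and for the inductive step one applies Proposition \ref{mentr} again to the tensor $T^{(j-1)}$, whose matrix representation is $(C^{t})^{j-1}\,{}^{\lambda}T\,C^{j-1}$ by hypothesis, obtaining $C^{t}\big((C^{t})^{j-1}\,{}^{\lambda}T\,C^{j-1}\big)C = (C^{t})^{j}\,{}^{\lambda}T\,C^{j}$. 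Once this is in hand, the cancellation argument above finishes the proof, and in fact the same argument shows more: each intermediate iterate $T^{(j)}$ for $0\le j\le k$ is invariant.
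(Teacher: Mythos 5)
Your proof is correct and follows essentially the same route as the paper: express the matrix representation of the $j^{th}$ iterate as $(C^t)^j\,{}^{\lambda}T\,C^j$ via Proposition \ref{mentr}, impose invariance of the $k^{th}$ iterate, and cancel using the invertibility of the linking map $C(z)\in GL(n)$. The only differences are cosmetic: an off-by-one in the indexing convention for the iterates (the paper ends with $(C^t)^{k-1}$ where you have $(C^t)^k$), and your explicit remarks about the intermediate iterates existing and about invertibility, which the paper leaves implicit.
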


\begin{proof}{{Proof.}} Let us denoted by $^{\lambda}T^j$ and $^{\lambda'}T^j$ the matrix representation of the $j^{th}$ iteration of $T$ with respect to $\lambda$ and $\lambda'$ respectively. $^{\lambda}T^k=^{\lambda'}T^k\circ f=
C^t.^{\lambda}T^k.C$ , since the $k^{th}$ iteration is an invariant tensor. On the other hand,
$^{\lambda}T^k=(^{\lambda'}T^{k-1}\circ f)=C^t$
 $^{\lambda}T^{k-1}C=C^t.(^{\lambda'}T^{k-2}\circ f) .C=
(C^t)^2.^{\lambda}T^{k-2}.C^2=(C^t)^{k-1}.^{\lambda}T.C^{k-1}$, hence $^{\lambda}T=C^t.^{\lambda}T. C \ .$
\end{proof}

Let $T$ be a tensor on $M$ and $\lambda=(N,\psi,O,R,\{e_i\})$ be a s-space over $M$. For each $z\in N$, consider the lie subgroup of $GL(n)$ defined by  $G_T(z)=\{D\in GL(n): D^t.^{
\lambda}T(z).D=^{\lambda}T(z) \}$.  We call it the \textit{group of invariance of} $T$ at $z$. For simplicity of notation we write $G_T(z)$ instead of $G_T^{\lambda}(z)$ which is more convenient. In these terms, a tensor $T$ is invariant by $(f,\tau)$ if and only if $C(z)\in G_T(z)$  for all $z\in N$.

If $\psi(z)=\psi(z')$ we have $G_T(z)\simeq G_T(z')$, because  $\varphi_a:G_T(z')\longrightarrow G_T(z)$ defined by $\varphi_a(D)=L(a).D.L(a^{-1})=Ad(L(a))(D)$  for  $a\in O$ such that $z'=z.a$, is a homomorphism of Lie groups. We called the subset $F_T=\{(z,g):z\in N \mbox{ and }g\in G_T(z)\}$ of $N\times GL(n)$ \textit{the invariance set of } $T$. If there  is a tensor $T$ on $M$ that admits a matrix representation of the form $^{\lambda}T=\alpha.Id_{n\times n}$, with $\alpha\neq 0$, then $F_T=N\times O(n)$.  Let $\lambda$ be the s-space of Example \ref{ejseLM}. If $T$ is the tensor on $LM$ that satisfies  $^{\lambda}T=\pmatrix{0 &
Id_{m\times m} \cr -Id_{m\times m} & 0}$ here $m=\frac{n+n^2}{2}$, then $F_T=LM\times GL(n)\times{\cal{S}}_m$ where $\cal S$$_m$ denotes the symplectic group of  $\R^{2m\times2m}$. In general $F_T$ does not has a manifold structure. The invariant tensor by a morphism $(f,\tau):\lambda\longrightarrow \lambda'$ they are those that satisfy that $(z,C(z))\in F_T$ for all $z\in N$.
\begin{rem}
Let $(f,\tau):\lambda \longrightarrow \lambda'$ be a morphism with linking map $C$. If $T\in I_{(f,\tau)}$ and $T$ is non degenerated, then $\det(C(z))=\pm1$ for all $z\in N$.
\end{rem}

\section{Connections on s-spaces.}

Given $\lambda=(N,O,\psi,\R,\{e_i\})$ a s-space over $M$, for $z\in N$ let us denote by $V_z$ the vertical subspace at $z$ induced by the projection $\psi$ (i.e. $V_z=\ker \psi_{*_z}$). Note that $\dim V_z=k-s$ where $s$ is the dimension of the stabilizer $S_z$ and $k=\dim O$. As  when we deal with fibrations (see \cite{Mi}), we have a notion of connections for  s-spaces.

\begin{defn} A connection on a s-space  $\lambda$ over $M$  is $(1,1)$ tensor $\phi$ on $N$ that satisfies:
\begin{itemize}
\item[1)]$\phi_z:N_z\longrightarrow V_z$ is a linear map.
\item[2)]$\phi^2=\phi$, $\phi$ is a projection to the vertical subspace.
\item[3)]$\phi_{z.a}((R_a)_{*_z}(b))=(R_a)_{*_z}(\phi(b))$.
\end{itemize}
\end{defn}

Note that $3)$ has sense because $(R_a)_{*_z}(V_z)= V_{z.a}$.

We called to $H_z=\ker \phi_z$ the \textit{horizontal subspace at } $z$. It is clear that $N_z=H_z\oplus V_z$. Since   $\phi_{za}((R_a)_{*_z}(\phi(z)(b)))=(R_a)_{*_z}(\phi(z)(b))=(R_a)_{*_z}(0)=0$, $(R_a)_{*_z}(H_z)=H_{z.a}$. As in the case of connections in principal fiber bundles we have that: There is a connection $\phi$ on $\lambda$  if and only if there exists a differentiable distribution on $N$ ($z\longrightarrow H_z$) such that $N_z=H_z\oplus V_z$ and $H_{z.a}=(R_a)_{*_z}(H_z)$. If we have a distribution with these properties, we define $\phi(z)(b)=b^v$ where $b=b^h+ b^v$.

Let $\lambda=(N,\psi,O,R,\{e_i\})$ be a s-space over $M$ endowed with a connection $\phi$, then we have the concept of {\it{horizontal lift}}.

\begin{defn}  Let $v\in M_p$ and $z\in \psi^{-1}(p)$. We called horizontal lift of $v$ at $z$  to the unique vector $v^h_z\in N_z$ such that $\psi_{*_z}(v^h_z)=v$ and $v^h_z\in H_z$.

\end{defn}

Given a vector field $X$ on $N$ , let $H(X)$ a $V(X)$ the vector fields that satisfy that $H(X)(z)\in H_z$, $V(X)(z)\in V_z$  and $X(z)=H(X)(z)+V(X)(z)$ for all $z\in N$. We called $H(X)$ and $V(X)$ the {\it{horizontal and the vertical projections}} of $X$.  Is easy to see that $H(X)$ and $V(X)$ are smooth vector fields if $X$ is a smooth vector field.

%%%%%%%%%%%%%%%%%%%%%%%%%%%%%%%%%%%
%%%%%%%%%%%%%%%%%%%%%%%%%%%%%%%%

\begin{prop} Let $X$ be a vector field on $M$. Then there exists a unique vector field $X^h$ on $N$  such that  $X^h(z)\in H_z$  and
$\psi_{*_z}(X^h(z))=X(\psi(z))$ for all  $z\in N$.

\end{prop}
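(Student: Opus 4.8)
The plan is to construct $X^h$ locally, using the horizontal subspaces provided by the connection $\phi$, and then observe that the local pieces patch into a global smooth vector field because of uniqueness. Concretely, fix $z\in N$ and set $p=\psi(z)$. Since $\psi$ is a submersion, $\psi_{*_z}$ maps $N_z$ onto $M_p$ with kernel $V_z$, and the decomposition $N_z=H_z\oplus V_z$ from the definition of a connection shows that the restriction $\psi_{*_z}|_{H_z}:H_z\longrightarrow M_p$ is a linear isomorphism. Hence there is exactly one vector in $H_z$ that projects to $X(p)$; this forces the definition $X^h(z)=\bigl(X(\psi(z))\bigr)^h_z$, the horizontal lift in the sense of the preceding definition. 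Uniqueness of $X^h$ as a vector field is then immediate: any vector field $Y$ with $Y(z)\in H_z$ and $\psi_{*_z}(Y(z))=X(\psi(z))$ must agree pointwise with this prescription because the horizontal lift of a given vector at a given point is unique.

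The real content is smoothness of $z\mapsto X^h(z)$. First I would show that $z\mapsto v^h_z$ depends smoothly on its data in a local trivialization. Since $\psi$ is a submersion, around any point of $N$ there are coordinates $(x^1,\dots,x^n,y^1,\dots,y^{\dim N-n})$ in which $\psi$ is the projection onto the $x$-coordinates; in these coordinates $V_z=\ker\psi_{*_z}$ is the span of the $\partial/\partial y^\alpha$, which vary smoothly. The horizontal distribution $z\mapsto H_z=\ker\phi_z$ is smooth because $\phi$ is a smooth $(1,1)$-tensor on $N$ and $H_z$ is its kernel of locally constant rank $n$; thus there is a smooth local frame $\{f_1(z),\dots,f_n(z)\}$ of $H_z$. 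Writing $\psi_{*_z}(f_a(z))=\sum_b A^b_a(z)\,(\partial/\partial x^b)|_{\psi(z)}$, the matrix $A(z)=(A^b_a(z))$ is smooth and invertible (because $\psi_{*_z}|_{H_z}$ is an isomorphism), so $A^{-1}(z)$ is smooth. If $X=\sum_b X^b(\partial/\partial x^b)$ in these coordinates, then on this chart
$$X^h(z)=\sum_{a,b} (A^{-1}(z))^a_b\, X^b(\psi(z))\, f_a(z),$$
which is manifestly a smooth vector field. The global smoothness follows by uniqueness: the locally defined smooth vector fields agree on overlaps, hence glue to a smooth vector field on all of $N$.

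Alternatively, and perhaps more cleanly, I would phrase the smoothness argument via the horizontal projection operator: for any smooth local vector field $\tilde X$ on $N$ that is $\psi$-related to $X$ (such a $\tilde X$ exists locally because $\psi$ is a submersion), the vector field $H(\tilde X)=\tilde X-\phi(\tilde X)$ is smooth (as noted in the text, horizontal projections of smooth fields are smooth), lies in $H_z$ at each point, and satisfies $\psi_{*_z}(H(\tilde X)(z))=\psi_{*_z}(\tilde X(z))-\psi_{*_z}(\phi(\tilde X)(z))=X(\psi(z))-0=X(\psi(z))$ since $\phi(\tilde X)(z)\in V_z=\ker\psi_{*_z}$. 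Hence $H(\tilde X)=X^h$ on that chart, exhibiting $X^h$ as smooth there. I expect the main obstacle to be purely organizational rather than deep: one must be careful that the horizontal lift of a vector is well-defined pointwise before invoking it (which uses $\psi_{*_z}|_{H_z}$ being an isomorphism, itself a consequence of $N_z=H_z\oplus V_z$ with $\dim H_z=n$), and then that smoothness is a local question settled by either of the two arguments above; the patching into a global object is then free from the uniqueness already established.
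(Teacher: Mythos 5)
Your proposal is correct and follows essentially the same route as the paper: uniqueness comes from $\psi_{*_z}|_{H_z}:H_z\longrightarrow M_{\psi(z)}$ being an isomorphism, and smoothness comes from lifting $X$ through submersion charts and applying the horizontal projection determined by $\phi$. The only (cosmetic) difference is that the paper glues the local $\psi$-related lifts into one global vector field with a partition of unity before projecting, whereas you project locally and patch the resulting horizontal fields using pointwise uniqueness, which works just as well.
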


\begin{proof}{{Proof.}}
Let $p_0\in M$ and $z_0\in N$ such that  $\psi(z_0)=p_0$. As $\psi$ is a submersion, there exist $(U,x)$ and   $(V,y)$ centered at $p_0$ and $z_0$ respectively  that satisfy $\psi(U)\subseteq
V$ and \linebreak$y\circ\psi\circ
x^{-1}(a_1,\dots,a_n,a_{n+1},\dots,a_m)=(a_1,\dots,a_n)$. If $X(p)=\sum_{i=1}^n\rho^i(p)\frac{\partial}{\partial
y_i}\mid_p$ for $p\in U$, let the vector field on $V$ defined by $\tilde{X}_U(z)=\sum_{i=1}^n(\rho^i\circ\psi)(z)\frac{\partial}{\partial
x_i}\mid_z$, then we have that $\psi_{*}(\tilde{X})=X\circ \psi$.
For this reason, we can take an open covering $ \{U_i \} _{i\in I} $  of $N$ such that
for each  $U_i $ we have a field $ \tilde{X}_i\in
\chi (U_i) $ that satisfies the previous property. Let
$ \{\zeta_i \} _{i\in I} $  be a  unit partition
 subordinate to the covering $\{U_i \} _{i\in
I} $. Consider the vector field $\tilde{X}\in \chi(N) $ given for $\tilde{X}=\sum_{i\in
I}\zeta_i.\tilde{X}_i $. $\tilde{X}$ satisfies that
$\psi_{*_z}(\tilde{X}(z)) = X(\psi(z))$ for all $z\in N $. Finally,  $H(\tilde{X})$ is the vector fields that we  looked for.
The uniqueness follows from the fact that $\psi_{*_z}\mid_{H_z}:H_z\longrightarrow M_{\psi(z)}$ is an isomorphism.

\end{proof}

%%%%%%%%%%%%%%%%%%%%%%%%%%%%%%%%%%%%%%%
%%%%%%%%%%%%%%%%%%%%%%%%%%%%%%%%%%%%%%

\begin{rem}

The horizontal distribution $z\longrightarrow H_z$ is trivial since $\{e_i^h(z)=(e_i(z))^h_z\}_{i=1}^n$ is a base of $H_z$ for all $z\in N$ and   $\{e_i^h\}_{i=1}^n$   are  smooth vector fields.

\end{rem}

For all $z\in N$ we have defined the function $\sigma_z:O\longrightarrow N$ given  by $\sigma_z(a)=z.a$. If $X\in {\mathfrak{o}} $, let $V(X)(z)=(\sigma_z)_{*_e}(X)\in V_z$, where $e$ is the unit element of $O$. If the group $O$ acts effectively and $X\neq 0$ is easy to see that $V$ is not the null vector field. If $O$ acts without fixed point, then $V(X)(z)\neq 0$ for all $z\in N$ and $X\neq 0$. Anyway if $\{X_1,\cdots X_{k}\}$ is a base of $\mathfrak{o}$, then $\{V(X_1)(z),\cdots,V(X_k)(z)\}$ spanned  $V_z$. It is not difficult to see that $\ker(\sigma_z)_{*_e}=T_eS_z$.
Consider the 1-forms $\theta_i $ on $N$ defined by $\psi_{*_z}(b)=\sum_{i=1}^n\theta^i(z)(b)e_i(z)$. $\{\theta^1(z),\cdots,\theta^n(z)\}$ are lineally independent and they are a base of the null space of the vertical subspace. Straightforward  calculations show  that the 1-forms $\theta_i$ satisfy that  $L(a).\pmatrix{\theta^1(z.a)((R_a)_{*_z}(b))\cr \vdots \cr
\theta^n(z.a)((R_a )_{*_z}(b))}=\pmatrix{\theta^1(z)(b)\cr \vdots
\cr \theta^n(z)(b) }$ for all $z\in N$ and $a\in O$.

\begin{prop}\label{setrivial}
 Let  $\lambda$  be a s-space over $M$ such that  exists a subspace $\tilde{V}$ of
$\mathfrak{o}$  that satisfies $\dim \tilde{V}=k-s$ ($s=\dim S_z$) and $\tilde{V}\cap T_eS_z=\{0\}$ for all $z\in N$.
If  $\lambda$ admits a connection, then the tangent bundle of $N$ is trivial.
\end{prop}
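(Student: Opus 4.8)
The plan is to exhibit an explicit global frame of $TN$. Recall from the Proposition on dimensions that $\dim N = \dim M + \dim O - s = n + k - s$, and that a connection $\phi$ on $\lambda$ gives, at every $z$, the splitting $N_z = H_z \oplus V_z$ with $\dim H_z = n$ and $\dim V_z = k - s$. By the Remark following the Proposition on horizontal lifts, the connection provides $n$ smooth vector fields $e_1^h,\dots,e_n^h$ on $N$ such that $\{e_1^h(z),\dots,e_n^h(z)\}$ is a basis of $H_z$ for every $z$. So it suffices to produce $k-s$ smooth vector fields on $N$ whose values at each $z$ form a basis of $V_z$.

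For this I would fix a basis $\{X_1,\dots,X_{k-s}\}$ of the subspace $\tilde V \subseteq \mathfrak{o}$ and consider the vector fields $V(X_1),\dots,V(X_{k-s})$ introduced above, namely $V(X_\alpha)(z) = (\sigma_z)_{*_e}(X_\alpha)$. These are smooth because the action map $(z,a)\mapsto z.a$ is smooth, and they take values in the vertical subspace since $\psi\circ\sigma_z$ is the constant map with value $\psi(z)$, whence $\psi_{*_z}(V(X_\alpha)(z)) = 0$, i.e. $V(X_\alpha)(z)\in \ker\psi_{*_z} = V_z$. The key point — and the only place the hypothesis is used — is that for each $z$ the linear map $\tilde V \longrightarrow V_z$, $X\mapsto (\sigma_z)_{*_e}(X)$, is an isomorphism: its kernel is $\tilde V \cap \ker(\sigma_z)_{*_e} = \tilde V \cap T_eS_z = \{0\}$ by assumption, so the map is injective, and since $\dim \tilde V = k-s = \dim V_z$ it is onto. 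Hence $\{V(X_1)(z),\dots,V(X_{k-s})(z)\}$ is a basis of $V_z$ for every $z\in N$.

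Combining the two families, $\{e_1^h,\dots,e_n^h,V(X_1),\dots,V(X_{k-s})\}$ is a collection of $n + k - s = \dim N$ smooth vector fields on $N$ whose values at any $z$ form a basis of $H_z\oplus V_z = N_z$. This is precisely a global trivialization of $TN$, so the tangent bundle of $N$ is trivial, which is the assertion.

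The argument is essentially a dimension count together with fiberwise injectivity; I do not expect a serious obstacle. The only delicate point is the identification $\ker(\sigma_z)_{*_e} = T_eS_z$, which is recorded in the discussion preceding the statement, and the fact that it must hold simultaneously for all $z$ in order that a single choice of $\tilde V$ trivializes the vertical distribution — but that uniformity is exactly what the hypothesis $\tilde V\cap T_eS_z = \{0\}$ for all $z\in N$ guarantees.
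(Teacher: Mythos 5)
Your argument is correct and coincides with the paper's proof: both take a basis $\{X_1,\dots,X_{k-s}\}$ of $\tilde V$, use the connection to get the horizontal frame $\{e_i^h\}$, and show that the vertical fields $z\mapsto(\sigma_z)_{*_e}(X_\alpha)$ together with the $e_i^h$ give a global frame of $TN$. Your write-up merely makes explicit the injectivity/dimension count (via $\ker(\sigma_z)_{*_e}=T_eS_z$) that the paper leaves implicit.
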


\begin{proof}{{Proof.}} Let $\{X_1,\dots,X_{k-s}\}$ be a base of $\tilde{V}$, then the vertical  vector fields $V_i(z)=(\sigma_z)_{*_e}(X_i)$ with $i=1,\dots,k-s$ are a base of $V_z$ for all $z\in N$. We have that $\{e_1^h, \dots,e_n^h,V_1,\dots, V_{k-s}\}$ trivialized  the tangent  bundle of $N$.

\end{proof}

\begin{rem} With the same hypothesis of the Proposition,  we a natural dual frame of $N$.
For $i=1,\dots,k-s$, let the 1-forms $W^i$ on $N$ defined by $ \phi_z(b)=\sum_{i=1}^{k-s}W^i(z)(b)V_i(z)$. Then is easy to see that $\{\theta^1(z),\cdots,\theta^n(z), W^1(z),\cdots,W^{k-s}(z)\} $ is a base of $N_z^*$ for all $z\in N$ and it is the dual  base of $\{e_1^h(z),\cdots,e_n^h(z), V_1(z),\cdots,V_{k-s}(z)\}$.
\end{rem}

\begin{rem} Let $\lambda=(N,\psi,O,R, \{e_i\})$ be  a s-space over $M$ that is also a principal fiber bundle. Is well know that every principal fiber  bundle admits a smooth distribution that is transversal to the vertical distribution and is invariant by the action of the group $O$, see  \cite{Du-Fo-No}, so there exists a connection on $\lambda$. On the other hand, the group $O$ acts on $N$ without fixed point and the hypothesis of the Proposition \ref{setrivial} are satisfied. Therefore, the tangent bundle of $N$ is trivial.

\end{rem}

\begin{rem} Let $G$ be a metric on $N$ such that the maps $R_a$ are isometries for all $a\in O$. If $O$ is compact and $N$ is a closed manifold, then  $N$ admits a metric with this property (see \cite{Du-Fo-No}). Let $H_z$ be the subspace of $N_z$ orthogonal to $V_z$. Is easy to see that $z\to H_z$ induces a connection on $\lambda$.

\end{rem}
\begin{rem}In the situation of Proposition \ref{setrivial}, we can lift  a metric $G$ on $M$ to  a metric $\tilde{G}$ on $N$ in a very natural way. Given $G$ a Riemannian metric on $M$ let  $$\tilde{G}=\psi^*(G)+\dst\sum_{i=1}^{k-s}W^{i}\otimes W^{i}.$$
$\tilde{G}$ is a metric on $N$ and $\psi:(N,\tilde{G})\longrightarrow (M,G)$ is a Riemannian submersion.
To keep in mind the metric $\tilde{G}$ can be very useful. For example, using the fundamental equations of a Riemannian submersion \cite{ON} we can relate the curvature tensors of both metrics. Sometimes if we chose appropriately the s-space over $M$, we can simplify considerably the calculation of the curvature tensor of $(M,G)$. This is the case when the base manifolds is the tangent bundle of a Riemnannian manifold. In \cite{H-K}, we use a s-space $\lambda$ and the metric $\tilde{G}$ to compute the curvature tensor of the tangent bundle endowed with certain class of $\lambda$ natural metrics with respect to the bundle.

\end{rem}

%%%%%%%%%%%%%%%%%%%%%%%%%%%%
%%%%%%%%%%%%%%%%%%%%%%%%%%%%%%%%%%%%%%%%%%%%%%%%%%%%%%%%
\begin{rem}\label{Sasakimok} Let $\lambda$ be a s-space over $M$ and let $\nabla$ be a linear connection on $M$ with connection function $K$. Consider $K^i:TN\longrightarrow TM$ defined by $$K^i_z(b)=K\Big((e_i)_{*_z}(b)\Big)$$
and let  $H_z=\{b\in N_z: K^i_z(b)=0\ \mbox{ for }\ i=1,\dots, n \}$. This smooth distribution is invariant by the group action but it is not necessary complementary to $V_z$. If  $F_z:N_z\longrightarrow M_{\psi(z)}\times \overbrace{M_{\psi(z)}\times \dots\times M_{\psi(z)}}^{n\ times}$ is given by $F_z(b)=(\psi_{*_z}(b),K^1_z(b),\dots,K^n_z(b))$ it is not difficult to see that there are equivalent:
\begin{itemize}
\item[] i) $F_z$ is injective  and $(M_{\psi(z)}\times 0\times \dots\times 0)\in Img\ F_z$.
\item[] ii) $N_z=H_z\oplus V_z$.
\end{itemize}

So if $\lambda$ satisfies $i)-ii)$ we have that $z\to H_z$ induces a connection on $\lambda$.
If $G$ is a metric on $M$ let the (0,2) symmetric tensor on $N$ given by   $$\tilde{G}(A,B) =c(z)G(\psi_{*_z}(A),\psi_{*_z}(B))+\dst\sum_{i=1}^nl_i(z)G(K^i(A),K^i(B))$$ where $c, l_i$ are positive differentiable functions. If $F$ is injective, the $\tilde{G}$ is a Riemmannian metric. If $\lambda$ is the s-space $LM$ and $c=1$ and $l_i=1$ for $i=1,\dots n$, then $\tilde{G}$ is the well know Sasaki-Mok metric (see \cite{Mok} and \cite{CorderoLeon}).
\end{rem}
%%%%%%%%%%%%%%%%%%%%%%%%%%%%%%%%%%%%%%%%%%
%%%%%%%%%%%%%%%%%%%%%%%%%%%%%%%%%%%%%%%%%

\section{Natural tensor fields}

\subsection{Natural tensor fields on  fibrations.}
  In this section we will study certain class of tensors  on a manifolds and  fibrations. With a tensor $T$ on a fibration we want to mean that $T$ is a tensor on the space manifold of the fibration.
   If $\alpha=(P,\pi, \F)$ is a fibration  we will consider a particular  class of s-spaces over $P$ in order to  take into account the structure of the fibration for the study of the tensors on it.
\begin{defn} Let $\alpha=(P,\pi, \F)$ be a fibration on $M$ and $\lambda=(N,\psi, O, R, \{e_i\})$ be a s-space over $P$. We say that $\lambda$  is a trivial s-space over $\alpha$ if  $N=N'\times \F$.
\end{defn}

\begin{exmpl} The s-space $\lambda=(LM\times GL(n),\psi, GL(n), R,\{H_i,V^i_j\})$ given in the example \ref{ejseLM} is a trivial s-space over the  frame bundle of $M$.
\end{exmpl}

\begin{defn}
Let $\alpha=(P,\pi,\F)$ be a fibration and $\lambda=(N\times\F,\psi, O, R, \{e_i\})$ be a trivial s-space over $\alpha$.
We say that a tensor $T$ on $P$ is $\lambda$-natural with respect to $\alpha$ if
$^{\lambda}T(z,w)=$$^{\lambda}T(w)$ (i.e. its matrix representation depends only of the parameter $w$ of the fiber $\F$).

\end{defn}

\begin{rem} Let $M$ be a manifold endowed with a linear connection $\nabla$ and a Riemannian metric $g$. If we consider the s-spaces $\lambda=(LM\times GL(n),\psi, GL(n), R,\{H_i,V^i_j\})$ (Example \ref{ejseLM}) and  $\lambda'=(O(M)\times GL(n),\psi, O(n), R,\{H_i,V^i_j\})$, where $O(M)$ is the manifold of orthonormal bases of $(M,g)$ and the action of  the orthonormal group and the projection are similar to that ones of $\lambda$, then the concept of $\lambda-natural$ and $\lambda ' -natural$ with respect to $(LM,\pi, GL(n))$ agree  with that ones  of natural tensor with respect to the connection $\nabla$ and with respect to the metric $g$   given in \cite{GK}.

\end{rem}

\begin{rem}
 There exist s-spaces  such that the concept of $\lambda-natural$ with respect to the fibration agree with the known cases of naturality. So, our definition also generalizes the notion of natural tensor on the tangent and the cotangent bundle of a Riemannian  (see \cite{MCC-GK} and  Example \ref{ejsubsuper1}) and semi-Riemannian
manifold (see \cite{A-GK}).
\end{rem}

\subsection{Natural tensor fields on manifolds.}

In view of the definition of $\lambda-natural$ with respect to a fibration, it seems interesting  to ask what it means to be $\lambda-natural$ with respect to a manifold. A manifold $M$ can be view as a trivial fibration $\alpha_M=(M\times\{a\},pr_1, \{a\})$ and there is a one to one correspondence between the s-spaces over $\lambda$ and the trivial s-spaces over $\alpha$. A s-space $\lambda=(N, \psi, O, R, \{e_i\})$ over $M$ induced the
 $\lambda'=(N\times\{a\},\psi, O, R, \{e_i\})$ over $\alpha$. A tensor $T$ on $M$ induce a tensor $T'$ on $M\times\{a\}$. Then $T'$ is $\lambda'-natural$ with respect to a $\alpha$ if and only if $^{\lambda'}T'(z,a)=^{\lambda'}T'(a)$, therefore $T'$ is  $\lambda'-natural$ with respect to a $\alpha$ if and only if $^{\lambda}T$ is a constant map. This suggests  the following definition:

\begin{defn}
Let $\lambda$ be a s-space over $M$ and $T$ a tensor on $M$. We say that $T$ is $\lambda-natural$ if $^{\lambda}T$ is a constant map.
\end{defn}

\begin{exmpl}
Let $(M,g)$ be a Riemannian manifold and let $\lambda=(O(M),\pi,O(n),\cdot,\{\pi_i\})$ the s-space over $M$ induced by the orthonormal frame bundles of $M$. Since $L(a)=a$ for all $a\in O(n)$, $T$ is $\lambda-natural$ if and only if $^{\lambda}T=k.Id_{n\times n}$, that is $T$ is an scalar multiple of the metric $g$.
\end{exmpl}

\begin{exmpl} Suppose that the map $F$ of the Remark \ref{Sasakimok} is bijective. Let $\beta=(N,id_N,\{1\},$ $,(\ \cdot\ ), \{(e_i(z))^h, (e_j(z))^{v(i)}_z\})$ be the s-space over the space manifold of $\lambda$, where $\{1\}$ is the trivial group and $(\ \cdot\ )$ is the trivial action, $(e_i(z))^h$ is the horizontal lift of $e_i(z)$ at $z$ and   $(e_j(z))^{v(i)}_z$ satisfies that $K^i((e_j(z))^{v(i)}_z)=e_j(z)$. If $G$ is a metric on $M$ and $\tilde{G}$ is the generalizes Sasaki-Mok metric on $N$ then $$^{\beta}\tilde{G}(z)=\pmatrix{[^{\lambda}G] & 0 &\cdots &0\cr 0 & [^{\lambda}G]& 0&  0\cr
0 & 0 & \ddots & 0 \cr 0& \cdots & \cdots & [^{\lambda}G]}$$
so $\tilde{G}$ is $\beta$ natural if and only if $G$ is $\lambda$-natural.
\end{exmpl}

\begin{rem} Let $\alpha=(P,\pi,\F)$ be a fibration on $M$ and $\lambda$ a trivial s-space over $\alpha$. $\lambda$ is also a s-space over $P$. If a tensor $T$ on $P$ is $\lambda-natural$ then $T$ is $\lambda-natural$ with respect to $\alpha$. The converse implication not necessarily holds. Let $\lambda=(O(M)\times GL(n),\psi,O(n),R,\{H_i,V^i_j\})$ over $LM$,  there are more $\lambda-natural$ tensors with respect to $LM$ than constant maps, see \cite{GK}.

\end{rem}

\begin{rem} Consider the s-space $LM$  and let  $T$ be a $LM-natural$ tensor on $M$. Let $A\in R^{n\times n}$ such that $^{LM}T\equiv A$. Since the base change morphism of $LM$ is the identity of $GL(n)$,   $A=a^t.A.a$ for all $a\in GL(n)$, hence  $T$ must be the null tensor. Therefore, for a manifold $M$ the null tensor is the only one  that is $\lambda-natural$ for all the s-spaces over $M$.
\end{rem}

\begin{rem}
If $T$ is $\lambda-natural$, we have that  $N\times Im(L)\subseteq F_T$ where $F_T=N\times G$ with $G$ a subgroup of $GL(n)$.
\end{rem}

Let $\lambda=(N,O,\psi,\R,\{e_i\})$ be a s-space over $M$ . Note that if $T$ is $\lambda-natural$ and $(f,\tau):\lambda\longrightarrow\lambda$ is a morphism of s-spaces then $T\in I_{(f,\tau)}$. In the other hand, if $T\in I_{(f,\tau)}$ for all $(f,\tau)$ automorphism of $\lambda$, then $^{\lambda}T$ is constant in each fiber of $N$.  A necessary and sufficient condition for a tensor $T$ to have a constant matrix representation  in each fiber is that $T\in I_{(f_a,\tau_a)}$ for all $a\in O$, where $(f_a,\tau_a)$ is the morphism defined by $f_a(z)=R_a(z)$ and $\tau_a(b)=a^{-1}b.a$. Let us see some facts  about the  relationship between the natural tensors and the morphisms of s-spaces.  The next two  proposition follow from Proposition \ref{mentr}.

\begin{prop}
Let $\lambda$ and $\lambda'$ be two s-spaces over $M$ and $(f,\tau):\lambda\longrightarrow\lambda'$ be a morphism with linking map $C$. If $T$ is a  $\lambda'-natural$ tensor with  $^{\lambda'}T= A\in \R^{n\times n}$, then $T$ is $\lambda-natural$ if and only if $(C(z)^{-1})^t.A.C(z)^{-1}$ is a constant map.
\end{prop}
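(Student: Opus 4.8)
The plan is to reduce the statement to a direct computation using Proposition~\ref{mentr}. Recall that a tensor $T$ is $\lambda'$-natural means $^{\lambda'}T\equiv A$ is a constant matrix, and $T$ is $\lambda$-natural means $^{\lambda}T$ is a constant matrix. The link between the two matrix representations is exactly Proposition~\ref{mentr}, which gives $(^{\lambda'}T\circ f)(z)=C(z)^t\cdot {}^{\lambda}T(z)\cdot C(z)$ for every $z\in N$, where $C$ is the linking map of $(f,\tau)$.

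First I would substitute $^{\lambda'}T\equiv A$ into this identity to obtain $A=C(z)^t\cdot {}^{\lambda}T(z)\cdot C(z)$ for all $z\in N$. Since $C(z)\in GL(n)$, I can invert: multiplying on the left by $(C(z)^{-1})^t=(C(z)^t)^{-1}$ and on the right by $C(z)^{-1}$ yields
$$
^{\lambda}T(z)=(C(z)^{-1})^t\cdot A\cdot C(z)^{-1}\qquad\text{for all }z\in N.
$$
This already identifies $^{\lambda}T$ explicitly as the map $z\mapsto (C(z)^{-1})^t\, A\, C(z)^{-1}$.

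From this the equivalence is immediate in both directions. If $(C(z)^{-1})^t\, A\, C(z)^{-1}$ is a constant map, then $^{\lambda}T$ equals that constant map, so $^{\lambda}T$ is constant, i.e.\ $T$ is $\lambda$-natural by definition. Conversely, if $T$ is $\lambda$-natural, then $^{\lambda}T$ is a constant matrix, and since $^{\lambda}T(z)=(C(z)^{-1})^t\, A\, C(z)^{-1}$ for every $z$, the map $z\mapsto (C(z)^{-1})^t\, A\, C(z)^{-1}$ is that same constant matrix, hence constant.

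There is essentially no obstacle here: the only thing to be careful about is the bookkeeping with transposes and inverses, i.e.\ that $(C^t)^{-1}=(C^{-1})^t$, which is a standard fact about invertible matrices. One could also phrase the writeup so as to not even invoke ``constant map'' twice but simply note that $^{\lambda}T$ and $z\mapsto (C(z)^{-1})^t A C(z)^{-1}$ are literally the same function of $z$, so one is constant iff the other is. I would present the proof in that compact form.
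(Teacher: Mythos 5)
Your argument is correct and is exactly the route the paper intends: the paper simply states that this proposition follows from Proposition \ref{mentr}, and your substitution of $^{\lambda'}T\equiv A$ into $(^{\lambda'}T\circ f)(z)=C(z)^t\,{}^{\lambda}T(z)\,C(z)$, followed by inverting $C(z)$, is precisely that omitted computation. Nothing is missing.
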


\begin{prop} Let $(f,\tau):\lambda\longrightarrow \lambda'$ be a morphism of s-spaces with linking map $C$ and $T$ a tensor on $M$  that is $\lambda$ and $\lambda'- natural$. Let $A$ and $B\in R^{n\times n}$ such that $^{\lambda}T=A$  and $^{\lambda'}T=B$, then $C(z)^t.A.C(z)=B$ for all $z\in N$.

\end{prop}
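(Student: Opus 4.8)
The plan is to invoke Proposition \ref{mentr} and combine it with the naturality hypotheses, essentially reading off the conclusion. First I would recall that, by the definition of the linking map $C$ of the morphism $(f,\tau):\lambda\longrightarrow\lambda'$, we have for every tensor $S$ on $M$ the identity $(^{\lambda'}S\circ f)(z)=(C(z))^t\,{}^{\lambda}S(z)\,C(z)$ for all $z\in N$, which is exactly the content of Proposition \ref{mentr}. I would apply this to $S=T$.

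Next I would substitute the naturality hypotheses. Since $T$ is $\lambda'$-natural with $^{\lambda'}T=B$, the left-hand side becomes $(^{\lambda'}T\circ f)(z)={}^{\lambda'}T(f(z))=B$, because $^{\lambda'}T$ is the constant map with value $B$ (and $f(z)\in N'$). Since $T$ is $\lambda$-natural with $^{\lambda}T=A$, the right-hand side becomes $(C(z))^t\,A\,C(z)$. Equating the two gives $(C(z))^t\,A\,C(z)=B$ for all $z\in N$, which is precisely the assertion.

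There is essentially no obstacle here: the statement is an immediate corollary of Proposition \ref{mentr} once one notes that both $^{\lambda}T$ and $^{\lambda'}T$ are constant. The only point worth stating carefully is that $f$ maps $N$ into $N'$, so that evaluating the constant map $^{\lambda'}T$ at $f(z)$ indeed yields $B$ regardless of $z$. I would write the proof in two short lines, as follows.

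\begin{proof}{{Proof.}}
By Proposition \ref{mentr}, for every $z\in N$ we have $(^{\lambda'}T\circ f)(z)=(C(z))^t.^{\lambda}T(z).C(z)$. Since $T$ is $\lambda-natural$, $^{\lambda}T(z)=A$ for all $z\in N$, and since $T$ is $\lambda'-natural$, $^{\lambda'}T\equiv B$, so $(^{\lambda'}T\circ f)(z)={}^{\lambda'}T(f(z))=B$. Combining both facts we obtain $C(z)^t.A.C(z)=B$ for all $z\in N$.
\end{proof}
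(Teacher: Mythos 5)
Your proof is correct and is exactly the argument the paper intends: the statement is listed among the propositions that "follow from Proposition \ref{mentr}", and your two-line substitution of the constant matrix representations $A$ and $B$ into the identity $(^{\lambda'}T\circ f)(z)=(C(z))^t.^{\lambda}T(z).C(z)$ is precisely that derivation. Nothing is missing.
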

 In particular, if $\lambda=\lambda'$  the image of the linking map of any automorphism have to be included in the  group of invariance of  all the $\lambda-natural$ tensors. For example, if $\lambda=(LM\times GL(n),\psi, GL(n), R,\{H_i,V^i_j\})$ and $(f,\tau)$ is an automorphism of $\lambda$ with linking map $C$, then $C(z)=Id_{(n+n^2)\times (n+n^2)}$ for all $z\in LM\times GL(n)$.

\begin{prop}
Let $\lambda=(N,\psi,O,R,\{e_i\})$ and $\lambda'=(N',\psi ',O',R',\{e_i'\})$ be two s-spaces over $M$, $(f,\tau):\lambda\longrightarrow\lambda'$  be a morphism of s-space, $T$ a $\lambda'-natural$ tensor and let $A\in R^{n\times n}$  such that $^{\lambda'}T=A$. Then $^{\lambda'}T\circ f$ comes from a tensor on $M$ if and only if $(L(a))^t.A.L(a)=A$ for all $a\in O$.
\end{prop}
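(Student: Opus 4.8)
The plan is to combine Proposition \ref{tensorquevienen} with the hypothesis that $T$ is $\lambda'$-natural. Since $T$ is $\lambda'$-natural we have $^{\lambda'}T \equiv A$, a constant matrix on all of $N'$; in particular $^{\lambda'}T\circ f$ is the constant map $z\mapsto A$ on $N$. By Proposition \ref{tensorquevienen}, this constant map comes from a tensor on $M$ if and only if
\[
(L(a))^t.(^{\lambda'}T\circ f)(z).L(a)=(L'(\tau(a)))^t.(^{\lambda'}T\circ f)(z).L'(\tau(a))
\]
for all $z\in N$ and $a\in O$, which, after substituting $(^{\lambda'}T\circ f)(z)=A$, reads
\[
(L(a))^t.A.L(a)=(L'(\tau(a)))^t.A.L'(\tau(a)).
\]

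So the real task is to show this identity is equivalent to $(L(a))^t.A.L(a)=A$ for all $a\in O$. For one direction: if $(L(a))^t.A.L(a)=A$ for all $a\in O$, then since $\tau(a)$ ranges over a subset of $O'$ and $^{\lambda'}T$ already satisfies the invariance property $^{\lambda'}T\circ R'_b=(L'(b))^t.\,^{\lambda'}T.L'(b)$ (from Proposition \ref{repmatricial}), evaluating at a point $z'_0$ with $R'_b z'_0$ in the same fiber gives $A=(L'(b))^t.A.L'(b)$ for every $b\in\mathrm{Img}(L'\circ\tau)$, in particular $(L'(\tau(a)))^t.A.L'(\tau(a))=A$; hence both sides of the displayed identity equal $A$ and it holds. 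For the converse: assume the displayed identity holds; again using that $^{\lambda'}T\equiv A$ is the matrix representation of the $\lambda'$-natural tensor $T$, its invariance property forces $(L'(b))^t.A.L'(b)=A$ for all $b\in O'$ (evaluate the invariance relation $^{\lambda'}T(z'.b)=(L'(b))^t.\,^{\lambda'}T(z').L'(b)$ and use constancy of $^{\lambda'}T$). In particular $(L'(\tau(a)))^t.A.L'(\tau(a))=A$, so the right-hand side of the displayed identity collapses to $A$, leaving $(L(a))^t.A.L(a)=A$.

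The only subtle point — and the step I expect to be the main obstacle in getting the logic airtight — is recognizing that the $\lambda'$-naturality of $T$ already imposes $(L'(b))^t.A.L'(b)=A$ for all $b\in O'$: this is exactly the statement that a constant map $F\equiv A$ satisfies the invariance property of Proposition \ref{repmatricial}, which is the defining condition for $A$ to be the matrix representation of a tensor. Once that observation is in hand, the right-hand term $(L'(\tau(a)))^t.A.L'(\tau(a))$ is \emph{always} equal to $A$, so the displayed equivalence from Proposition \ref{tensorquevienen} simplifies immediately to $(L(a))^t.A.L(a)=A$, and the proposition follows. I would present the proof as: invoke Proposition \ref{tensorquevienen}, substitute the constant $^{\lambda'}T\equiv A$, observe $(L'(\tau(a)))^t.A.L'(\tau(a))=A$ for all $a$ because $T$ is $\lambda'$-natural, and conclude.

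\begin{proof}{{Proof.}}
Since $T$ is $\lambda'$-natural, $^{\lambda'}T\equiv A$ is constant on $N'$, so $^{\lambda'}T\circ f$ is the constant map $z\mapsto A$ on $N$. By Proposition \ref{tensorquevienen}, $^{\lambda'}T\circ f$ comes from a tensor on $M$ if and only if $(L(a))^t.A.L(a)=(L'(\tau(a)))^t.A.L'(\tau(a))$ for all $z\in N$ and $a\in O$. Now, as $^{\lambda'}T\equiv A$ is the matrix representation of $T$ with respect to $\lambda'$, Proposition \ref{repmatricial} gives $^{\lambda'}T\circ R'_b=(L'(b))^t.\,^{\lambda'}T.L'(b)$, and evaluating this with $^{\lambda'}T$ constant equal to $A$ yields $(L'(b))^t.A.L'(b)=A$ for all $b\in O'$. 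In particular $(L'(\tau(a)))^t.A.L'(\tau(a))=A$ for all $a\in O$. Hence the condition of Proposition \ref{tensorquevienen} reduces to $(L(a))^t.A.L(a)=A$ for all $a\in O$, which proves the claim.
\end{proof}
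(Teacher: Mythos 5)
Your proof is correct and follows essentially the same route as the paper: both observe that $\lambda'$-naturality together with the invariance property of Proposition \ref{repmatricial} forces $(L'(b))^t.A.L'(b)=A$ for all $b\in O'$, so the criterion of Proposition \ref{tensorquevienen} collapses to $(L(a))^t.A.L(a)=A$. Your write-up just spells out the substitution step that the paper leaves implicit.
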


\begin{proof}{{Proof.}}  Since $T$ is $\lambda'-natural$,  $(L'(a'))^t.A.L'(a')=A$ for all $a'\in O'$, then the Proposition  follows from  Proposition \ref{tensorquevienen}.

\end{proof}

\begin{rem}
 There are tensors on $M$ that are not $\lambda-natural$ for any $\lambda$ s-space over $M$. Let $T$ be a not null tensor on $M$, then there exists $p\in M$ such that $T(p):M_p\times M_p\longrightarrow \R$ is not the null bilinear form. Let $f$ be a differentiable function on $M$ that satisfies $f(p)=1$ and $f(q)=0$ for a point  $q$ different of $p$ and consider the tensor $\widetilde{T}$ defined by $\widetilde{T}(\xi)=f(\xi).T(\xi)$. If $\tilde{T}$ is $\lambda-natural$, then $^{\lambda}\tilde{T}\equiv A$ and since $\widetilde{T}(q)=0$, $A$ must be the  zero matrix. But for $z'\in \psi^{-1}(p)$, we have that $^{\lambda}\widetilde{T}(z')=[\widetilde{T}(q)(e_i(z'),e_j(z'))]=f(p)[T(p)(e_i(z').e_j(z'))]\neq 0$, hence $T$ is not $\lambda-natural$.

\end{rem}

\begin{prop}Let $T$ be a symmetric tensor on $M$ with index and rank constant, then there is a s-space $\lambda$ over $M$ such that  $T$ is $\lambda-natural$.

\end{prop}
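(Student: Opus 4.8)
The plan is to build the desired s-space out of an orthonormal-type frame adapted to $T$. Since $T$ is symmetric with constant index $\nu$ and constant rank $r$, at each point $p \in M$ the bilinear form $T(p)$ on $M_p$ has a well-defined radical $\mathrm{rad}(p) = \{v \in M_p : T(p)(v,\cdot) = 0\}$, of dimension $n-r$, and on a complement $T(p)$ is nondegenerate of signature $\nu$. First I would take $N$ to be the set of all frames $\{v_1,\dots,v_n\}$ of $M_p$ (as $p$ ranges over $M$) that are ``$T$-adapted'': the last $n-r$ vectors span $\mathrm{rad}(p)$, and the first $r$ vectors form a basis of a complement on which the Gram matrix of $T$ equals the standard diagonal matrix $\mathrm{diag}(-1,\overset{\nu}{\dots},-1,1,\dots,1)$ of size $r$. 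Equivalently, $N$ is the set of frames $u$ with $[^{LM}T](u)$ equal to the fixed block matrix
$$
B \;=\; \pmatrix{ I_\nu^{(r)} & 0 \cr 0 & 0 },
$$
where $I_\nu^{(r)}$ is the $r\times r$ analogue of the matrix $I_\nu$ defined in the excerpt. I would let $\psi: N \to M$ be the restriction of the frame-bundle projection, and $e_i: N \to TM$ send a frame to its $i$-th vector.

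Next I would verify the s-space axioms. That $N$ is a submanifold of $LM$ (hence a manifold) and that $\psi$ is a submersion follows from the constancy of index and rank, which makes the local models of $T$ uniform: near any $p_0$ one can choose a smooth local frame realizing $B$ by a Gram–Schmidt-type procedure applied separately to a smoothly varying complement of the (smoothly varying, because rank is constant) radical, so $N$ is locally a product $U \times (\text{fibre})$. For the group, let $O$ be the stabilizer of $B$ under the action $a \mapsto a^t B a$ on $\R^{n\times n}$, i.e. $O = \{a \in GL(n) : a^t B a = B\}$; this is a closed subgroup of $GL(n)$, hence a Lie group, and it acts on $N$ on the right by $u \cdot a = u.a$ (frame-bundle action). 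This action preserves $N$ precisely because $a \in O$ preserves $B$, it is transitive on each fibre because any two $T$-adapted frames at the same point differ by an element of $O$ (this is the standard fact that the orthogonal-type group acts transitively on adapted bases, applied blockwise to the nondegenerate part and the radical and the ``off-diagonal'' freedom), and $\psi \circ R_a = \psi$. The base change morphism is $L(a) = a$, which is rigid. Finally $\{e_i(u)\}$ is by construction a basis of $M_{\psi(u)}$, and the $e_i$ are smooth as restrictions of the smooth $\pi_i$. So $\lambda = (N,\psi,O,R,\{e_i\})$ is an s-space over $M$ with rigid base change.

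It then remains to observe that $^{\lambda}T$ is constant: by definition of $N$, $[^{\lambda}T(u)]^i_j = T(\psi(u))(e_i(u),e_j(u)) = B^i_j$ for every $u \in N$, so $^{\lambda}T \equiv B$, and $T$ is $\lambda$-natural. (One should also double-check consistency with Proposition \ref{repmatricial}: the constant map $F \equiv B$ satisfies the invariance property $F \circ R_a = L(a)^t F L(a)$ exactly because $a \in O$ means $a^t B a = B$, so $B$ is indeed a legitimate matrix representation.) The main obstacle is the smoothness/submersion part — showing that the $T$-adapted frames form a genuine submanifold of $LM$ on which $\psi$ is a submersion; this is where the hypotheses ``index and rank constant'' are essential, and the argument is the local-frame construction sketched above (smooth radical subbundle because rank is locally constant, plus smooth Gram–Schmidt on a complement using the locally constant signature). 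Everything else is routine verification of the axioms.
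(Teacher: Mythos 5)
Your construction is essentially the paper's: restrict $LM$ to the frames in which the Gram matrix of $T$ equals a fixed canonical matrix (possible precisely because index and rank are constant), act by matrices preserving that canonical form, and observe that $^{\lambda}T$ is then the constant canonical matrix. The one substantive difference is the choice of group: you take the full stabilizer $O=\{a\in GL(n): a^tBa=B\}$, while the paper takes only the block-diagonal subgroup $O(s)\times O(r-s)\times GL(n-r)$. Your choice is the better one: when $0<r<n$ the stabilizer of $B$ also contains block-triangular elements shearing the radical into the complement (e.g.\ replacing $v_1$ by $v_1+w$ with $w$ in the radical leaves the Gram matrix unchanged), and these are needed for the group to act transitively on the fibres of $N$, as axiom (c) of an s-space requires; with the paper's smaller group transitivity fails in the degenerate case, so your version repairs this point rather than deviating from it. You also supply the verification, which the paper leaves implicit, that the $T$-adapted frames form a submanifold of $LM$ on which the projection is a submersion (smooth radical subbundle from constant rank, Gram--Schmidt on a smooth complement from constant index); this is exactly where the hypotheses enter, so spelling it out is a genuine improvement, and the rest of your argument (rigid base change $L(a)=a$, invariance of the constant map $F\equiv B$) matches the paper.
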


\begin{proof}{{Proof.}}
If $rank(T)=0$ then $T$ is the null tensor and $T$ is $\lambda-natural$ for all $\lambda$. Suppose that $rank(T)=r\geq 1$ and $index(T)=r-s$.  For every  $p\in M$ there is a base $\{v_1,\dots,v_s,v_{s+1},\dots,v_r,v_{r+1},\dots,v_n\}$ of
$M_p$ that diagonalizes the matrix of $T(p)$, that is
 $[T(p)(v_i,v_j)]=$
 $\pmatrix{Id_{s\times s} &
0 & 0 \cr 0 & -Id_{(r-s)\times (r-s)} & 0 \cr 0 & 0 & 0 }=I_{sr}$. Let  $\lambda=(N,\pi,O,\cdot,\{\pi_i\})$ where $N=\{(q,v)\in LM: [T(q)(v_i,v_j)]=I_{sr}\}$,
$O=\pmatrix{O(s)& 0 & 0 \cr 0 & O(r-s) & 0  \cr 0 & 0 &
GL(n-r)}$ and the action, the projection and the map $\{\pi_i\}$ are similar to those of $LM$. Then $^{\lambda}T=I_{sr}$
\end{proof}

\section{Subs-spaces.}

Let $\lambda=(N,\psi,O,R,\{e_i\})$ and $\lambda'=(N',\psi', O',R',\{e_i'\})$ be s-spaces over $M$ and $N$ respectively and $h:M\longrightarrow M'$ be a differentiable function. Let $f:N\longrightarrow N'$ be a differentiable function and $\tau:O\longrightarrow O'$  a group morphism.

\begin{defn} We said that $(f,\tau)$ is $a$ morphism of s-spaces over
 $h$ if $f(z.a)=f(z).\tau(a)$ for all $z\in N$ and $a\in O$ and
 $\psi'\circ f=h \circ\psi$.
\end{defn}

This definition generalize the concept of morphism of s-spaces. If $\lambda$ and $\lambda'$ are s-spaces over $M$, and $(f,\tau):\lambda\longrightarrow \lambda'$ is a morphism of s-spaces then $(f,\tau)$  is a morphism over  $Id_M$.

\begin{exmpl}\label{ejsubsuper1} Let $(M,g)$ be a Riemannian manifold and let $\lambda=(O(M)\times R^n,\psi, O(n),R,\{e_i\})$ the s-space over $TM$  where
the projection is defined by $\psi(p,u,\xi)=(p,\sum_{i=1}^nu_i\xi^i)$ and the action of the orthonormal group on $O(M)\times \R^n$  is given by  $R_a(p,u)=(p,u.a,\xi.a)$. For $1\leq i \leq n$, let  $e_i(p,u,\xi)=(\pi_{*_{\psi(p,u,\xi)}}\times
K_{\psi(p,u,\xi)})^{-1}(u_i,0)$  and
$e_{n+i}(p,u,\xi)=(\pi_{*_{\psi(p,u,\xi)}}\times
K_{\psi(p,u,\xi)})^{-1}(0,u_i)$,  where $K$ is the connection map induced by the Levi-Civita connection of $g$. Before we see an example of subs-space let us make a brief comment. The tensor on $TM$ that are $\lambda$ natural with respect to $TM$ agree with the ones of Calvo-Keilhauer \cite{MCC-GK}. The Sasaki $G_S$ and the Cheeger-Gomoll $G_{cg}$  metric are $\lambda-natural$ with respect to $TM$. The matrix representation of the Sasaki metric and the Chegeer-Gromoll metric are $^{\lambda} G_S(p,u,\xi)=\pmatrix{Id_{n\times n} & 0 \cr 0 & Id_{n\times n}}$ and $^\lambda G_{cg}(p,u,\xi)=\pmatrix{Id_{n\times n} & 0 \cr 0 &
\frac{1}{1+|\xi|^2}(Id_{n\times n}+(\xi)^t.\xi)}$ respectively .

Consider the s-space $\lambda'=(O(M), \psi',O(n-1),R', \{e_i'\})$ over the unitary tangent $T_1M$ bundle of $M$, where $\psi'(p,u)=(p,u_n)$, The action of $O(n-1)$ on $O(M)$ is given by  $R'_a(p,u)=(p,\sum_{i=1}^{n-1}u_ia^{i}_1,\dots,\sum_{i=1}^{n-1}u_ia^{i}_{n-1},u_n)$. The maps $\{e_i'\}$ are defined by  $e'_i(p,u)=(\pi_{*_{\psi(p,u)}}\times
K_{\psi(p,u)})^{-1}(u_i,0)$ if $1\leq i \leq n$ and
by $e'_{n+i}(p,u)=(\pi_{*_{\psi(p,u)}}\times
K_{\psi(p,u)})^{-1}(0,u_i)$ if $1\leq i\leq n-1$. Let $f:O(M)\longrightarrow O(M)\times \R^n$ and $\tau:O(n-1)\longrightarrow O(n)$ defined by
$f(p,u)=(p,u,v)$ where $v$ is the $n^{th}$ vector of the canonic base of $R^n$,  and $\tau(a)=\pmatrix{a & 0 \cr 0 & 1}$. $(f,\tau):\lambda\longrightarrow\lambda'$ is a morphism of s-spaces over the inclusion map of $T_1M$ in $TM$.
\end{exmpl}

Let $M$ and $M'$ be manifolds of dimension $n$ and $n'$ respectively. Let  $\lambda=(N,\psi,O,R,\{e_i\})$ and $\lambda'=(N',\psi',O',R',\{e_i'\})$ be s-spaces over $M$ and $M'$ and $(f,\tau):\lambda\longrightarrow
\lambda'$ a morphism of s-space over an  inmersion $h:M\longrightarrow M'$. For every $z\in N$,  $h_{*_{\psi(z)}}(M_{\psi(z)})$ is a subspace of dimension $n$ of $M'_{\psi(f(z))}$ and it is generated by $\{h_{*_{\psi(z)}}(e_1(z)),\dots,h_{*_{\psi(z)}}(e_{n}(z))\}$. As $\{e_i'(f(z))\}$ is a base of  $M'_{\psi'(f(z))}$, for every $z\in N$ there exists a matrix $A(z)\in R^{n'\times n'}$ with $rank(A(z))=n$ that satisfies
$$\{h_{*_{\psi(z)}}(e_1(z)),\dots,h_{*_{\psi(z)}}(e_{n}(z)),\overbrace{0,\dots,0}^{n'-n}\}=\{e_1'(f(z)),\dots,e_{n'}'(f(z))\}.A(z)$$
In the previous example,  $A(p,u)=\pmatrix{Id_{(2n-1)\times (2n-1)} & 0 \cr 0 & 0}$.
 If $M=M'$ and $h$ is the identity map then $(f,\tau)$ is a morphism of s-spaces and $A(z)=C^{-1}(z)$ is $C$ is the linking map of $(f,\tau)$.

In this situation, we have the following definition:

\begin{defn}
$\lambda$ is a subs-space of $\lambda'$ if there exists a morphism of s-spaces $(f,\tau)$ over an injective  inmersion $h:M\longrightarrow M'$ such that $f$ is an inmersion and the map $A$ induced by $(f,\tau)$ is constant. In this case, we said that $\lambda$ is a subs-space of $\lambda'$ with morphism $(f,\tau)$ over  $h$.  A s-space $\lambda=(N,\psi,O,R,\{e_i\})$  is  included  in $\lambda'=(N',\psi',O',R',\{e_i'\})$ if $N\subseteq N'$.

\end{defn}

\begin{exmpl}
 Let $M$ be a parallelizable manifold , $V$ a vectorial space and $V'$  a subspace of $V$. Let $GL(V)$ the group of linear isomorphisms of $V$ and $GL(V,V')$ the subgroup of linear isomorphisms of $V$ with the property that $T(V')=V'$. Consider the s-space $\lambda=(M\times V, pr_1, GL(V), R_{f}, \{e_i\})$ over $M$, where the action is defined by  $R_f(p,z)=(p,f(z))$ for $(p,z)\in M\times V$  and $f\in GL(V)$, and $e_i=\bar{e}_i\circ pr_1$ where
$\{\bar{e}_1,\cdots,\bar{e}_n\}$ are the vector fields that trivialized the tangent bundle of $M$. If $\lambda'=(M\times V',pr_1,GL(V,V'), R_f ,\{e_i\} )$, then $\lambda'$ is a subs-space of $\lambda$.

\end{exmpl}

\begin{prop} Let $\lambda=(N,\psi,O,R,\{e_i\})$ and $\lambda'=(N',\psi',O',R',\{e_i'\})$ be s-spaces over $M$ such that $\lambda$ is a subs-space of $\lambda'$ with  morphism $(f,\tau)$  over the identity map of $M$. If  a tensor $T$ on $M$ is $\lambda'-natural$ then $T$ is $\lambda-natural$.
\end{prop}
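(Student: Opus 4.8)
The plan is to reduce the statement to Proposition \ref{mentr} together with the observation, recorded just above the definition of subs-space, that when the base map $h$ is the identity of $M$ the pair $(f,\tau)$ is an ordinary morphism of s-spaces over $M$, and the matrix map $A$ induced by $(f,\tau)$ coincides with $C^{-1}$, where $C\colon N\to GL(n)$ is the linking map of $(f,\tau)$. So the only structural input beyond Proposition \ref{mentr} is the constancy of $A$ built into the definition of subs-space.

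First I would note that, since $\lambda$ is a subs-space of $\lambda'$ with morphism $(f,\tau)$ over $\mathrm{id}_M$, the induced map $A$ is constant; because $A(z)=C(z)^{-1}$ and $C$ takes values in $GL(n)$, this forces $C$ to be a single constant invertible matrix $C_0\in GL(n)$. Next I would apply Proposition \ref{mentr} to $T$: for every $z\in N$,
$$(^{\lambda'}T\circ f)(z)=C(z)^t\cdot{}^{\lambda}T(z)\cdot C(z)=C_0^t\cdot{}^{\lambda}T(z)\cdot C_0.$$
Since $T$ is $\lambda'$-natural, $^{\lambda'}T$ is a constant matrix $B\in\R^{n\times n}$, hence $^{\lambda'}T\circ f\equiv B$. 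Combining this with the displayed identity gives $B=C_0^t\cdot{}^{\lambda}T(z)\cdot C_0$ for all $z$, so
$$^{\lambda}T(z)=(C_0^{-1})^t\,B\,C_0^{-1},$$
which is independent of $z$. Therefore $^{\lambda}T$ is a constant map and $T$ is $\lambda$-natural.

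I do not expect any genuine obstacle: the content is entirely contained in Proposition \ref{mentr} and in the fact that the subs-space hypothesis makes the linking map constant. The only points needing a moment of care are verifying that $(f,\tau)$ is really a morphism of s-spaces over $M$ (so that Proposition \ref{mentr} applies, and the identification $A=C^{-1}$ is legitimate), and observing that $C$ is pointwise invertible — automatic, since the linking map is $GL(n)$-valued — so that $C_0^{-1}$ in the last display makes sense.
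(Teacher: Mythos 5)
Your argument is correct and is essentially the paper's own: the paper simply expands $e_i(z)=\sum_l e'_l(f(z))A^l_i$ with the constant matrix $A$ to get $^{\lambda}T(z)=A^t\cdot({}^{\lambda'}T\circ f)(z)\cdot A$, which is exactly the inverted form of the Proposition \ref{mentr} identity $({}^{\lambda'}T\circ f)=C^t\cdot{}^{\lambda}T\cdot C$ with $A=C^{-1}$ that you invoke. The only difference is that you cite Proposition \ref{mentr} and the constancy of $C=A^{-1}$ explicitly, whereas the paper inlines the same change-of-basis computation.
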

\begin{proof}{{Proof.}}$[^{\lambda
}T(z)]_{ij}=T(\psi(z))(e_i(z),e_j(z))=T(\psi'(f(z)))(\sum_{l=1}^ne'_l(z)A^l_i,\sum_{s=1}^ne'_s(z)A^s_j)=$

$=\sum_{ls}^n A_i^l.A^s_j [^{\lambda'}T]_{ij}$, then $^{\lambda
}T$ is a  constant map.

\end{proof}

\begin{rem} The converse statement does not holds in general. Let $(M,g)$ be a Riemannian manifold and  $O(M)$ be the s-space  induced by the principal bundle of orthonormal frames. If $i_{O(M)}:O(M)\longrightarrow LM$ and $i_{O(n)}:O(n)\longrightarrow GL(n)$ are the respective inclusion, then $O(M)$ is a subs-space of $LM$ with morphism $(i_{O(M)},i_{O(n)})$ over the identity map of $M$. We known that there are $O(M)-natural$ tensors that are not $LM-natural$.

\end{rem}

Let $T$ be a tensor on $M$ and let  $^{LM}T:LM\longrightarrow
R^{n\times n}$ the matrix map induced by the s-space $LM$. Given  a s-space $\lambda=(N,\psi,O,R,\{e_i\})$ over $M$ we have a morphism $(\Gamma,L):\lambda\longrightarrow LM$, see Example \ref{ejmorfismo1}. It is clear that   $^{\lambda}T=^{LM}T\circ \Gamma$, thus if $T$ is $\lambda-natural$ then there exists a matrix $A\in \R^{n\times n}$ such that  $Img \Gamma
\subseteq (^{LM}T)^{-1}(A)$.

\begin{prop} Let $T$ be a tensor on $M$.
There exists $\lambda$ a s-space over $M$ such that  $T$ is $\lambda-natural$ if and only if there exist a matrix $A\in R^{n\times n}$ and a subs-space of $LM$ included in $(^{LM}T)^{-1}(A)$.

\end{prop}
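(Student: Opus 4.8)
The plan is to prove both implications directly, using the morphism $(\Gamma,L):\lambda\longrightarrow LM$ from Example \ref{ejmorfismo1} together with the identity $^{\lambda}T = {}^{LM}T\circ\Gamma$ noted just above the statement. For the forward implication, suppose $T$ is $\lambda$-natural for some s-space $\lambda=(N,\psi,O,R,\{e_i\})$; then $^{\lambda}T\equiv A$ for a constant matrix $A\in\R^{n\times n}$. Since $^{\lambda}T = {}^{LM}T\circ\Gamma$, we get $\mathrm{Img}\,\Gamma\subseteq({}^{LM}T)^{-1}(A)$. The first key step is to check that the image $\mathrm{Img}\,\Gamma$ carries the structure of an s-space over $M$ included in $LM$: concretely, $\mathrm{Img}\,\Gamma = \{(p,u)\in LM : {}^{LM}T(p,u)=A\}$ is exactly the level set, it is invariant under the $GL(n)$-action only through the subgroup $\{a : L(a)^t A L(a)=A\}$, and one forms the s-space $(\mathrm{Img}\,\Gamma,\pi,\mathrm{Img}(L),(\cdot),\{\pi_i\})$ — here the relevant group is $\mathrm{Img}(L)\subseteq GL(n)$, which by Proposition \ref{repmatricial} applied to the constant map $A$ must satisfy the invariance property, so $\mathrm{Img}\,\Gamma$ is indeed stable under it. One should verify (c) of the s-space definition, i.e. transitivity on fibers: this follows because $O$ acts transitively on the fibers of $N$ and $\Gamma$ intertwines the $O$-action with the $\mathrm{Img}(L)$-action. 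Then exhibiting $(\Gamma,L)$ (with corestricted codomain) as the required morphism over $\mathrm{id}_M$ with constant linking map $C\equiv\mathrm{Id}$ shows this is a subs-space of $LM$ included in $({}^{LM}T)^{-1}(A)$.

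For the converse, suppose there is a matrix $A$ and a subs-space $\mu=(N',\psi',O',R',\{e_i'\})$ of $LM$, included in $({}^{LM}T)^{-1}(A)$, with morphism $(f,\tau)$ over $\mathrm{id}_M$ and constant linking map $A_{lk}$ (the constant matrix from the subs-space definition, which in the $M=M'$, $h=\mathrm{id}$ case equals $C^{-1}$ with $C$ the linking map of $(f,\tau)$). By Proposition \ref{mentr}, $^{LM}T\circ f = C^t\cdot{}^{\mu}T\cdot C$, equivalently $^{\mu}T = (C^{-1})^t\cdot({}^{LM}T\circ f)\cdot C^{-1} = (A_{lk})^t\cdot({}^{LM}T\circ f)\cdot A_{lk}$. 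Since $\mu$ is included in $({}^{LM}T)^{-1}(A)$, we have $N'\subseteq LM$ and $\psi' = \pi|_{N'}$, so $f$ maps $N'$ into the level set where $^{LM}T\equiv A$; hence $^{LM}T\circ f\equiv A$, and therefore $^{\mu}T = (A_{lk})^t\cdot A\cdot A_{lk}$ is a constant matrix. Thus $T$ is $\mu$-natural, and $\mu$ is the desired s-space over $M$.

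The main obstacle I expect is bookkeeping around the exact formulation of ``subs-space of $LM$ included in $({}^{LM}T)^{-1}(A)$'' — one must be careful that ``included'' ($N'\subseteq LM$) forces $\psi'$ and $\{e_i'\}$ to agree with the restrictions of $\pi$ and $\{\pi_i\}$ (this is essentially the content of the ``included in'' clause combined with the s-space axioms), so that $^{\mu}T$ is literally $({}^{LM}T)|_{N'}$ rather than some twisted version; and, in the forward direction, one must confirm that $\mathrm{Img}\,\Gamma$ — a priori just a subset of $LM$ — is genuinely an embedded submanifold on which the corestriction of $\Gamma$ and the maps $\pi,\{\pi_i\}$ restrict to give a legitimate s-space. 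The latter point is where a little care with the differentiable structure (using that $\Gamma$ may not be an embedding, so one may prefer to take $N' := ({}^{LM}T)^{-1}(A)$ directly as a regular level set, assuming $A$ is a regular value, or otherwise pass to the image with its manifold structure) is needed; everything else is a routine matrix computation via Propositions \ref{repmatricial} and \ref{mentr}.
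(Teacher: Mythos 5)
Your overall strategy is the same as the paper's: in the forward direction you pass to the image and take $\lambda'=(\Gamma(N),\pi,L(O),\ \cdot\ ,\{\pi_i\})$ as the subs-space of $LM$, and in the converse you use the constancy of the matrix coming from the subs-space definition together with Proposition \ref{mentr} to conclude that the matrix representation of $T$ is constant; your converse is exactly the paper's computation $^{\lambda}T=B^t.A.B$, and your reading of ``included'' (so that ${}^{LM}T\circ f\equiv A$) agrees with the paper's, which assumes $f(N)\subseteq({}^{LM}T)^{-1}(A)$. However, two points in your forward direction need repair. First, your ``first key step'', the equality $Img\,\Gamma=({}^{LM}T)^{-1}(A)$, is false in general: only the inclusion $Img\,\Gamma\subseteq({}^{LM}T)^{-1}(A)$ holds. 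For instance, if $M$ is parallelizable with an orthonormal frame field $\{H_i\}$ for a metric $T=g$ and $\lambda=(M,Id_M,\{1\},R_1,\{H_i\})$, then $Img\,\Gamma$ contains one frame per point of $M$, while $({}^{LM}T)^{-1}(Id_{n\times n})$ is all of $O(M)$. This is harmless only because the rest of your argument (and the statement itself) uses nothing more than the inclusion. Second, your fallback of taking $N'=({}^{LM}T)^{-1}(A)$ ``assuming $A$ is a regular value'' is not available: nothing in the hypotheses makes $A$ a regular value of ${}^{LM}T$, and the proposition does not require the full level set. The paper avoids this by staying with $\Gamma(N)$ and checking the s-space axioms directly: surjectivity and submersivity of $\pi$ on $\Gamma(N)$ by pushing curves of $N$ forward through $\Gamma$, and transitivity of $L(O)$ on the fibers exactly as you argue.

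Also be careful about which morphism exhibits the image as a subs-space of $LM$: by the definition it must be a morphism \emph{from} the candidate subs-space \emph{into} $LM$, namely the pair of inclusions $(i_{\Gamma(N)},i_{L(O)})$ over $Id_M$, whose induced matrix is $Id_{n\times n}$ --- not ``$(\Gamma,L)$ with corestricted codomain'', which is a morphism $\lambda\longrightarrow\lambda'$. Nor can one use $(\Gamma,L):\lambda\longrightarrow LM$ itself, since $\Gamma$ need not be an immersion: in the paper's example $\lambda=(\R^n\times(\R^n-\{0\}),pr_1,GL(n),R,\{\frac{\partial}{\partial u_i}\})$ the map $\Gamma$ collapses the second factor. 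With these corrections your proof coincides with the paper's.
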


\begin{proof}{{Proof.}}
Suppose that $T$ is $\lambda- natural$ ( $\lambda=(N,\psi,O,R,\{e_i\})$) and let $A\in R^{n\times n}$ such that $^{\lambda}T=A$. Let $\lambda'=(\Gamma(N),\pi,L(O),R', \{\pi_i\} )$, where $\pi$, $R'$ and $\{\pi_i\}$ are induced by $LM$.    The map              $\pi:\Gamma(N)\longrightarrow M$ is a submersion. Since $\pi(\Gamma(N))=\psi(N)=M$, $\pi$ is surjective. Let $p\in M$ and $z\in \psi^{-1}(p)$, then $\pi(\Gamma(z))=p$. We  are going to see that $\pi_{*_{\Gamma(z)}}:N_{\Gamma(z)}\longrightarrow M_p$ is surjective. Given $v\in M_p$ there exists $w\in N_z$ such that $\psi_{*_z}(w)=v $. Let $\alpha$ be a curve on $N$ that satisfies $\alpha(0)=z$ and $\dot{\alpha}(0)=w$, then for $\beta(t)=\Gamma(\alpha(t))$ we have that
$\beta(0)=\Gamma(z)$ and $\pi_{*_{\Gamma(z)}}(\dot{\beta}(0))=D|_0(\pi(\beta(t)))=\psi_{*_z}(w)=v$. In the other hand, it is clear that $L(O)$ acts transitively on $\Gamma (N)$, so $\lambda'$ is a s-space and it is a subs-space of $LM$ with morphism $(i_{\Gamma(N)},i_{L(O)})$ over the identity map of $M$.

Conversely, suppose that there exist $A\in R^{n\times n}$ and $\lambda=(N,\psi,O,R\{e_i\})$ a s-space over $M$ that is also a subs-space of $LM$ with morphism $(f,\tau)$ over the identity map,  and it holds that $f(N)\subseteq (^{LM}T)^{-1}(A)$. Since $\{e_i(z)\}=\{\pi_i(f(z))\}.B$ for $B\in GL(n)$,   $[^{\lambda}T(z)]=[T(\psi(z))(e_i(z),e_j(z))]=B^t.[T(\psi(z))(\pi_i(f(z)),\pi_j(f(z)))].B=B^t.A.B$

\end{proof}

\section{Atlas of s-spaces.}

\begin{defn}
Let $M$ be a manifold and let  ${\cal{A}}:\{
\lambda_i=(N_i,\psi_i,O_i,R_i,\{e_l\}) \}_{i \in I}$ be a collection of s-spaces over $M$. The collection  ${\cal{A}}$ is called an Atlas of s-spaces if for each pair  $(i,j)\in I\times I$ there is a morphism of s-spaces $(f_{ij},\tau_{ij}):\lambda_i\longrightarrow \lambda_j$ such that $f_{ij}:N_i\longrightarrow N_j$
 is a diffeomorphism.

\end{defn}

We said that the s-spaces $\lambda$ and $\beta$ are {\textit{compatible}} if there exists a morphism $(f_{\lambda \beta},\tau_{\lambda,
 \beta}):\lambda\longrightarrow \beta $ and  $(f_{\beta \lambda},\tau_{\beta,
 \lambda}):\beta\longrightarrow \lambda $ such that  $f_{\lambda
 \beta}$ and $f_{\beta \lambda}$ are diffeomorphisms. Hence, an atlas is a set of compatible s-spaces over $M$.  If ${\mathcal{A}}$ satisfies that for an atlas ${\cal{B}}$, ${\cal{A}}\subseteq{\cal{B}}$  implies ${\cal{A}}={\cal{B}}$, we  called it a  {\textit{maximal atlas}}. In other words, if $\lambda$ is a s-space compatible with the s-spaces of ${\cal{A}}$ then $\lambda\in {\cal{A}}$. If $\lambda$ is a s-space over $M$ let us notate with $\cal{A}=<\lambda>$ the maximal atlas generated by $\lambda$. Let ${\cal{A}}$ be a maximal atlas, it follows from the definition that  ${\cal{A}}=<\lambda>$ for every $\lambda\in {\cal{A}}$. Note that there are different maximal atlases over a  manifold. Consider a metric on $M$, then $<LM>$ and $<O(M)>$ are maximal s-spaces but they  are different because $LM$ and $O(M)$ are not compatible.

Let $\lambda$ be a s-space over $M$, then ${\cal{A}}=\{\lambda\}$ is an atlas. Therefore the concept of atlas is a generalization of the notion of s-space.

\begin{exmpl}\label{ejatlas1}Let $\lambda=(N,\psi,O,R,\{e_i\})$ be a s-space over $M$ and let $A:N\longrightarrow GL(n)$ be a differentiable function. Consider $\lambda_A=(N,\psi,O,R,\{e_l^A\})$ where $e^A_l(z)=\sum_{i=1}^ne_i(z)A^i_l(z)$. The collection ${\cal{A}}=\{\lambda_A\}_{A\in {\cal{F}}(M)}$ is an atlas of s-spaces.

\end{exmpl}

\begin{exmpl} Let $M$ be a parallelizable manifold and $\{H_i\}_{i=1}^n$ the vector fields that trivialized the tangent bundle of $M$. Let $(N,g)$ be a Riemannian manifold such that its isometry group $I_{(N,g)}$ acts transitively on $N$. Let $\lambda_{(N,g)}=(M\times N, pr_1, I_{(N,g)}, R_f, \{ H_i\circ
pr_1\})$ where the action of $I_{(N,g)}$ on $M\times N$ is given by  $R_f (z,p)=(z,f(p))$. If $(N',g')$ is isometric to $(N,g)$ then $\lambda_{(N,g)}$ is compatible with $\lambda_{(N',g')}$. If $N'$ is not diffeomorphic to $N$ then  $< \lambda_{(N,g)}>$ and $ <\lambda_{(N',g')}>$ are different maximal atlas of s-spaces over $M$.

\end{exmpl}

\begin{defn}
Let ${\cal{A}}$ and  ${\cal{B}}$ be two atlases of s-spaces over $M$ and $F$ a collection of morphisms of s-spaces from a s-space of ${\cal{A}}$
to a s-space of ${\cal{B}}$. $F$ will be called a morphism between the atlas ${\cal{A}}$ and ${\cal{B}}$ if for every $\lambda\in {\cal{A}}$  and $\beta\in {\cal{B}}$ there exist $(f,\tau)\in F$ such that $(f,\tau):\lambda\longrightarrow\beta $.
\end{defn}

\begin{rem}
Let ${\cal{A}}$ and ${\cal{B}}$ be two atlas  over $M$, $\lambda_0\in {\cal{A}}$, $\beta_0\in {\cal{B}}$ and $(f_0,\tau_0):\lambda_0\longrightarrow\beta_0$. Consider $\dst F=\{f_{\beta_0
\beta}\circ f_0\circ f_{\lambda
\lambda_0},\tau_{\beta_0\beta}\circ\tau_0\circ \tau_{\lambda
\lambda_0, }\}_{\lambda\in {\cal{A}},\
 \beta\in {\cal{B}}}$ where $(f_{\beta_0
\beta},\tau_{\beta_0\beta}):\beta_0\longrightarrow \beta$ and
$(f_{\lambda
\lambda_0},\tau_{\lambda\lambda_0}):\lambda\longrightarrow
\lambda_0$ are the morphisms that show the compatibility between  $\beta$ and $\beta_0$  and  between $\lambda$ and $\lambda_0$.  $F$ is morphism of atlases between ${\cal{A}}$ and  ${\cal{B}}$.

\end{rem}

\begin{rem}
If $\lambda$ is a s-space over $M$ we have  a canonical  morphism $(\Gamma_{\lambda},L_{\lambda}):\lambda\longrightarrow LM$ (see Example \ref{ejmorfismo1}), hence for every s-space $\lambda$  we have a morphism between the atlases  $<\lambda>$ and  $<LM>$. It seems interesting  to ask if this property characterized $<LM>$. In other words, if a s-space $\beta $ satisfies that for every $\lambda$ there exists a morphism $(f_{\lambda},\tau_{\lambda}):\lambda\longrightarrow \beta$  it has to be necessarily compatible with $LM$?
$$\xymatrix{&
\lambda\ar[dl]_{(\Gamma_{\lambda},L_{\lambda})}\ar[dr]^{(f_{\lambda},\tau_{\lambda})} & \\
LM\ar@/_/[rr]_{(f_{LM},\tau_{LM})}& &
\beta\ar@/_/[ll]_{(\Gamma_{\beta},L_{\beta})}}$$

The answer is  no. Consider a  parallelizable Riemannian manifold $(M,g)$. Let $\{H_i\}_{i=1}^n$ be orthonormal fields that trivialized the tangent bundle of $M$. If $\lambda=(N,\psi,O,R,\{e_i\})$ is a s-space over $M$ let $(f_{\lambda},\tau_{\lambda}):\lambda\longrightarrow O(M)$ defined by $f(z)=(\psi(z),H_1(\psi (z)),\dots,H_n(\psi (z)))$ and
$\tau(a)=Id_{n\times n}$. Therefore, for every maximal atlas ${\cal{A}}$ there is a morphism between it and $O(M)$, and we just know that $O(M)$ it is not compatible with $LM$.
$$\xymatrix{ & {\cal{A}}\ar[dl]_{F_{{\cal{A}},LM}}\ar[dr]^{F_{{\cal{A}},O(M)}} & \\
<LM>\ar@/_/[rr]_{F_{LM, O(M)}}& &
<O(M)>\ar@/_/[ll]_{F_{O(M),LM}}}$$

But there are more atlases with this property. If $(M,g)$ is an oriented manifold, the maxi-mal atlas generated by the s-space induced by the principal fiber bundles of orthonormal oriented bases $SL(M)$ have this property.  The atlas   $<(M,Id_M, \{1\},
R_1,\{ H_i \})>$, where $R_1$ is the trivial action, is another example.
\end{rem}

\begin{defn} Let ${\cal{A}}$ be an atlas of s-spaces over $M$. A  tensor $T$  on $M$ will be called ${\cal{A}}-natural$ if $T$ is $\lambda-natural$ for all $\lambda\in{\cal{A}}$.

\end{defn}

Note that the concept of ${\cal{A}}- naturality$ generalized the notion of $\lambda-naturality$. If we considerer the atlas ${\cal{A}}=\{\lambda\}$ then $T$ is ${\cal{A}}-natural$ if and only if $T$  is $\lambda-natural$.

\begin{exmpl}\label{ejatlas2}
Let $\lambda$ be a s-space over $M$ and consider the subatlas of the  atlas given in the Example \ref{ejatlas1} defined by ${\cal{A}}=\{\lambda_A\}_{A\in GL(n)}$. Then $T$ is ${\cal{A}}-natural$ if and only if $T$ is $\lambda-natural$. Let $T$ be a $\lambda-natural$ tensor on $M$ and ${\cal{A'}}=\{\lambda_A\}_{A\in {\cal{F}}(N,G_{T})}$, then $T$ is ${\cal{A'}}- natural$ and it has the same matrix representation in all the s-spaces of the atlas.
\end{exmpl}

\begin{rem} If ${\cal{A}}$ is a maximal atlas then the unique ${\cal{A}}-natural$ tensor is the null tensor. Let $\lambda=(N,\psi,O,R,\{e_i\})\in {\cal{A}}$ and $f:N\longrightarrow\R$ be a differentiable function such that $f(z)\neq 0$ for all $z\in N$ and $f^2$ is not constant.  If $\lambda'=(N,\psi,O,R,\{f.e_i\})$, hence $\lambda'\in {\cal{A}}$,  but the null tensor is the only $\lambda-natural$ and $\lambda'-natural$ at the same time, therefore $T\equiv 0$ is the unique ${\cal{A}}-natural$ tensor.
\end{rem}

\begin{defn}

Let ${\cal{A}}$ be an atlas of s-spaces over $M$ and $T$ a tensor on $M$. $T$ is called ${\cal{A}}-weak\ natural$  if there exists $\lambda\in {\cal{A}}$ such that $T$ is $\lambda-natural$.

\end{defn}

If $\cal{A}=\{\lambda\}$ or $\cal{A}$ is the atlas of Example \ref{ejatlas2}, then the concept of ${\cal{A}}-natural$ and ${\cal{A}}-weak\ natural$  coincide.

For study the naturality of tensors on a fibration $\alpha=(P,\pi, \F)$ it will be useful consider the atlases $\cal{A}$ such that all its s-spaces are trivial over $\alpha$. An atlas with this property will be called a\textit{ trivial atlas } over $\alpha$. The following definition is a generalization of the concept of naturality with respect to a fibration:
\begin{defn}
Let $\cal{A}$ be a trivial atlas over a fibration $\alpha=(P,\pi,\F)$ and $T$ a tensor on $P$, then $T$ is ${\cal{A}}-natural$ with respect to $\alpha$ if $T$ is $\lambda-natural$ with respect to $\alpha$  for all $\lambda\in \cal{A}$.

\end{defn}

\begin{exmpl}
Let $\alpha=(P,\pi,G,\ \cdot\ )$ be a principal fiber bundle on $(M,g)$ endowed with a connection $\omega$. For every $W=\{W_1\cdots,W_k\}$ base of $\mathfrak{g}$ let  $\lambda_{W}=(N,\psi,O,R,\{e_i^W\})$ where $N=\{(p,u,b):p \in P,\ u\ \mbox{is an orhonormal base of}\  M_{\pi(p)}, \ b \in G\}$, $\psi(q,u,b)=q.b$, \linebreak$O=O(n)\times G$ and the action $R$ is defined by $R_{(h,a)}(q,u,b)=(qa,uh,a^{-1}b)$.  For $1\leq i \leq n$,  $e_i^W(p,u,g)$ is the horizontal lift of  $u_i$ with respect to $\omega$  at $p.g$  and for $1\leq j\leq k$,  $e_{n+j}(p,u,g)$ is the only one vertical vector on $P_{p.g}$ such that $\omega(p)(e_{n+j}(p,u,g))=W_j$. ${\cal{A}}=\{\lambda_W\}_{W\in L\mathfrak{g}}$ is a trivial atlas over $\alpha$. An  easy computation shows that the set of ${\cal{A}}-natural$ tensor with respect to $\alpha$ are all of those that  there exists $\lambda_W$
 such that $T$ has a matrix representation of the form $^{\lambda_W}T(q,u,a)=\pmatrix{f(a).Id_{n\times n} & 0 \cr 0 &
B(a)}$, where $f:G\longrightarrow \R$ and  $B:G\longrightarrow
\R^{k\times k}$ are differentiable functions.

\end{exmpl}

As above, if $\cal{A}$ is a maximal trivial atlas over $\alpha$ the only ${\cal{A}}-natural$ tensor  with respect to $\alpha$ is the null tensor. So we have a weak definition of naturality for this case too. We said that $T$ is ${\cal{A}}-weak\ natural$ with respect to $\alpha$ if $T$ is $\lambda-natural$ with respect to $\alpha$ for some $\lambda\in {\cal{A}}$.

\section{Examples.}

We conclude showing  some examples of s-spaces:
\subsection{Lie groups.}

 Let $G$ be a Lie group of dimension $k$. We notated with $e$ the unit of $G$. If $v=\{v_1,\dots,v_n\}$ is a base of $\mathfrak{g}$, let $H_i^v$ be the unique  left invariant vector field on $G$ such that $H_i^{v}(e)=v_i$. Then $\{H_1^v(g),\dots,H_n^v(g)\}$ is base of the tangent space of $G$ at  $g$.

 \begin{exmpl} Given $v$ a basis of $\mathfrak{g}$, let $\lambda^v=(N,\psi,G,R,\{e_i^v\})$ be the s-space over $G$ defined by  $N=G\times G$, $\psi(g,h)=g.h$, $R_a(g,h)=(g.a,a^{-1}.h)$ and $e_i^v(g,h)=H_i^v(g.h)$ for $1\leq i\leq k$. Like $e_i^v \circ
R_a(g,h)=e^v_i(g,h)$, the base change morphism $L^v$ is constantly the identity matrix of $R^{k\times k }$. Therefore, if $T$ is a tensor on $G$ it satisfies that   $$^{\lambda^v}T\circ R_a=^{\lambda^v}T\ .$$
For this reason,  all constant matricial maps come from  a tensor, hence the $\lambda^v-natural$ tensors are in a one to one relation with the matrices of $R^{k\times k}$.

Suppose that $^{\lambda
^{v}}T$ depends only of one parameter, for example  $^{\lambda
^{v}}T(g,h)=^{\lambda ^{v}}T(h)$, then $[^{\lambda^v}T(g',h')]_{ij}=[^{\lambda^v}T(g'hh'^{-1},h')]_{ij}=
 T(g'h)(H_i^v(g'h),H_j^v(g'h))=[^{\lambda^v}T(g',h)]_{ij}
=[^{\lambda^v}T(g,h)]_{ij}$, that is $T$ is  $\lambda^v-natural$. Therefore, $T$ is $\lambda^v-natural$ if and only if $T$ is $^{\lambda
^{v}}T$ depends only of one parameter. The left invariant metrics are tensors of this type.

Let $v'$ be another base of $\mathfrak{g}$ and consider $\lambda^{v'}$. If $a_{vv'}\in GL(k)$ is the matrix that satisfies $v'=a_{vv'}v$, then we have that  $e_i^{v'}(g,h)=e_i^{v}(g,h).a_{vv'}$ and $ ^{\lambda ^{v'}}T=(a_{vv'})^t. ^{\lambda ^{v}}T.a_{vv'}$ for a tensor $T$ on $M$. Thus the set of $\lambda^v-natural$ tensors is independent of the choice of the base $v$. We can observe that $(Id_{G\times G}, Id_{G})$ is a morphism of s-spaces with constant linking map $a_{vv'}$, so $T\in I_{(Id_{G\times G}, Id_{G})}$ if and only if $a_{vv'}\in G_{T}(g,h)$.

\end{exmpl}

\begin{exmpl} Let $\lambda=\{N,\psi, O, R,\{e_i\}\}$ be the s-space over $G$ defined by \linebreak $N=G\times L\mathfrak{g}=\{(g,v):g\in G\ and\ v \mbox{ is a base of } \mathfrak{g}\}$, $\psi(g,v_1,\dots,v_n)=g$, $O=GL(n)$,  $R_{\xi}(g,v)=(g,v.a)$ and $e_i(g,v)=H_i^v(g)$. Since $\{e_i\}\circ R_{\xi}=\{e_i\}.\xi$, $ ^{\lambda}T \circ R_{\xi}=\xi^t.^{\lambda}T.\xi$ for all $\xi \in GL(k)$. Therefore, there is  only one $\lambda-natural$ and is the null tensor.

 The left invariant metrics on $G$ are not $\lambda-naturals$ but for a metric $T$ on $G$  we have that $T$ is a left invariant metric if and only if $^{\lambda}T(g,v)=^{\lambda}T(v)$. If $T$ is a left invariant metric, then  $$[^{\lambda}T(g,v)]_{ij}=T(g)((L_g)_{*_e}(v_i),(L_g)_{*_e}(v_j))=$$
  $$T(e)((L_{g^{-1}})_{*_g}((L_g)_{*_e}(v_i)),(L_{g^{-1}})_{*_g}((L_g)_{*_e}(v_i)))=T(e)(v_i,v_j)=[^{\lambda}T(e,v)]_{ij}.$$ Suppose that the matrix representation induced by $T$ depends only of the parameter of $\mathfrak{g}$. Let $g,h\in G$ and  $w,v\in T_gG$ we have to see that $T(g)(v,w)=T(hg)((L_h)_{*_g}(v),(L_h)_{*_g}(w))$. Let $\{u_1,\dots,u_n\}$ be a base of $\mathfrak{g}$. If
$v=\sum_{i=1}^nv_i(L_g)_{*_e}(u_i)$ and
$w=\sum_{i=1}^nw_i(L_g)_{*_e}(u_i)$, then $(L_h)_{*_g}(v)=\sum_{i=1}^nv_i(L_{hg})_{*_e}(u_i)$ and
$(L_h)_{*_g}(w)=\sum_{i=1}^nw_i(L_{hg})_{*_e}(u_i)$. Hence,
$$T(hg)((L_h)_{*_g}(v),(L_h)_{*_g}(w))=\pmatrix{v_1 &, \dots, &
v_n}.^{\lambda}T(hg,u).\pmatrix{w_1 \cr \vdots \cr w_n}=T(g)(v,w).$$

Let $T$ be a tensor  such that $^{\lambda}T(g,v)$ depends only of $v$. We  know that
$^{\lambda}T(g,v.\xi)=(\xi)^t.^{\lambda}T(e,v).\xi$ for all $\xi\in GL(k)$. Fixed $v_0\in L\mathfrak{g}$ and let $F:L\mathfrak{g}\longrightarrow GL(k)$ be defined by $v=v_0.F(v)$. Then, $^{\lambda}T(g,v)=(F(v))^t.^{\lambda}T(e,v_0).F(v)$ for all $(g,v)\in G\times L\mathfrak{g}$. So we have that

$^{\lambda}T$ depends only of the parameter of $L\mathfrak{g}$ if and only if there exist $A\in  R^{k\times k}$ and a differentiable function $F:L\mathfrak{g}\longrightarrow GL(k)$ that satisfies $F(w.\xi)=F(w).\xi$, such that
$$^{\lambda}T(g,w)=(F(w))^t.A.F(w)$$

\end{exmpl}

\begin{exmpl}
 Fixed $v\in L\mathfrak{g}$ and consider  $\lambda^v=(G\times O(k),\psi, O(k),R ,\{e_i^v\})$ where $\psi(g,\xi)=g$, $R_a(g,\xi)=(g,\xi a)$,  $e_i^v(g,\xi)=H_i^{v.\xi}(g)$ . $\lambda$ is a s-space over $G$ with base change morphism $L=Id_{O(k)}$. If $T$ is a tensor of $M$, then  $^{\lambda}T \circ R_a=a^t.^{\lambda}T.a$. Therefore, $T$ is $\lambda-natural$ if and only if $^{\lambda}T(g,\xi)=f(g).Id_{k\times k}$ with $f:G\longrightarrow\R$ a differentiable function. Is easy to see that $^{\lambda}T((g,\xi).a)=(\xi a)^t.^{\lambda}T (g,Id).(\xi a)$, hence the matrix representation of $T$ depends only of the parameter of $O(k)$ if and only if  $^{\lambda}T(g,\xi)=\xi ^t. A.\xi\ \mbox{ with } A\in \R^{n\times
n}.$

\end{exmpl}

\subsection{Bundle metrics.}

Let $\alpha=(P,\pi, G,\ \cdot\ )$ be a principal fiber bundle  endowed with a connection $\omega$  on a Riema-nnian manifold $(M,g)$. Let us denote with
${\cal {M}}_{{\bf{ad}}}(\mathfrak{g})$ the set of metrics on $\mathfrak{g}$ that are invariant by the adjoint map $\bf{ad}$. Consider the metric on $P$ defined by
\begin{equation}\label{bundlemetric}
 h(p)(X,Y)=g(\pi(p))(\pi_{*_p}(X),\pi_{*_p}(Y))+(l \circ
\pi)(p)(\omega(X),\omega(Y))
\end{equation}
 where $l:M\longrightarrow {\cal
{M}}_{{\bf{ad}}}(\mathfrak{g})$. If $G$ is compact, ${\cal {M}}_{{\bf{ad}}}(\mathfrak{g})\neq \emptyset$, and if $\mathfrak{g}$  is also a semisimple algebra, then essentially there is  (unless scalar multiplication) only one  positive defined  ${\bf{ad}}$-invariant metric \cite{Mil}. If $l$ is a constant function, $h$ is called a \textit{bundle metric}. It is  easy to see that $\pi:(P,h)\longrightarrow (M,g)$ is a Riemannian submersion.

Let $l_0$ be an $\bf{ad}$-invariant map on $\mathfrak{g}$. We are going to consider the s-space $\lambda=(N,\psi,O,R,\{e_i\})$ over $P$ given by $N=\{(q,u,v,g):q\in P,\  u\ \mbox{is an orthonormal base of } M_{\pi(q)},$
$v$ is an orthonormal base of $\mathfrak{g}$ with respect to $l_0$ and $g\in G\}$,    $\
\psi(q,u,v,g)=q.g$, $O=O(n)\times O(k)\times G$ and the action is defined by  $R_{(a,b,h)}(q,u,v,g)=(qh,ua,vb,h^{-1}g)$.
For $1\leq i \leq n$,  $e_i(q,u,v,g)$ is the horizontal lift with respect to $\omega$ of  $u_i$ at $q.g$
  and, for $1\leq j\leq k$, $e_{n+j}(q,u,v,g)$ is the unique vertical vector on $P_{p.g}$ such that $\omega(q.g)(e_{n+j}(q,u,v,g))=v_j$. $\lambda$ is a trivial s-space over $\alpha$.

Let $G$ be a compact Lie group with  $\mathfrak{g}$ a semisimple algebra and $h$ a metric on $P$ of the type of (\ref{bundlemetric}). Then, we have the following proposition:

\begin{prop} $h$ is $\lambda-natural$ with respect to $\alpha$ if and only if $h$ is a bundle metric.

\end{prop}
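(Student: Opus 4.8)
The plan is to compute the matrix representation $^{\lambda}h$ explicitly and then read off exactly when it fails to depend on the point $q$ and the orthonormal bases $u,v$, leaving only dependence on the fiber parameter $g\in G$. First I would fix a point $(q,u,v,g)\in N$, set $p=q.g=\psi(q,u,v,g)$, and evaluate $h(p)\big(e_i(q,u,v,g),e_j(q,u,v,g)\big)$ using formula (\ref{bundlemetric}). For $1\le i,j\le n$ the vectors $e_i,e_j$ are $\omega$-horizontal lifts of the orthonormal vectors $u_i,u_j$ at $p$, so $\omega$ annihilates them and $\pi_{*_p}(e_i)=u_i$; hence the upper-left $n\times n$ block is $g(\pi(p))(u_i,u_j)=\delta_{ij}$, the identity $Id_{n\times n}$, independent of everything. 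For $1\le i\le n$, $1\le s\le k$ the block vanishes because $e_i$ is horizontal and $e_{n+s}$ is vertical, so the off-diagonal blocks are $0$. For the lower-right block, $\pi_{*_p}$ kills the vertical vectors $e_{n+r},e_{n+s}$ and $\omega(p)(e_{n+r})=v_r$, $\omega(p)(e_{n+s})=v_s$, so the entry is $(l\circ\pi)(p)(v_r,v_s)=l(\pi(q))(v_r,v_s)$, since $\pi(p)=\pi(q.g)=\pi(q)$.

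Thus I would establish
\[
^{\lambda}h(q,u,v,g)=\pmatrix{Id_{n\times n} & 0 \cr 0 & \big[\,l(\pi(q))(v_r,v_s)\,\big]_{r,s}}.
\]
From this the statement is essentially immediate in both directions. If $h$ is a bundle metric, then $l\equiv l_0'$ is constant; writing $l_0'$ in the orthonormal-with-respect-to-$l_0$ basis $v$ gives a matrix $B$ that is the Gram matrix of one $\mathbf{ad}$-invariant inner product relative to an orthonormal basis of another, and one must check this is independent of $q$, $u$, $v$, $g$. Independence of $q$, $u$, $g$ is visible from the display; independence of $v$ is the point where $\mathbf{ad}$-invariance and semisimplicity enter: by Milnor's remark quoted in the paper, on a compact semisimple $\mathfrak{g}$ the $\mathbf{ad}$-invariant inner product is unique up to a scalar, so $l_0'=c\,l_0$ for some $c>0$, whence $B=c\,Id_{k\times k}$ and $^{\lambda}h\equiv\pmatrix{Id_{n\times n}&0\cr0&c\,Id_{k\times k}}$ is a constant map; a constant map trivially depends only on the fiber parameter, so $h$ is $\lambda$-natural with respect to $\alpha$. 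Conversely, if $h$ is $\lambda$-natural with respect to $\alpha$, then $^{\lambda}h(q,u,v,g)$ depends only on $g$; comparing the lower-right block at two points $(q,u,v,g)$ and $(q',u',v',g)$ with $\pi(q)\ne\pi(q')$ forces $l(\pi(q))(v_r,v_s)=l(\pi(q'))(v_r,v_s)$ for all orthonormal $v$, i.e. $l(\pi(q))=l(\pi(q'))$ as bilinear forms; since $\pi$ is surjective, $l$ is constant on $M$, which is precisely the definition of $h$ being a bundle metric.

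The main obstacle, and the only substantive point beyond bookkeeping, is the step using compactness and semisimplicity: one has to invoke the uniqueness of the $\mathbf{ad}$-invariant metric to pass from "$l$ constant" to "$^{\lambda}h$ a genuinely constant matrix", and—more carefully—to argue the converse direction cleanly, that $\lambda$-naturality with respect to $\alpha$ only constrains the $q$-dependence (not the $v$-dependence, which is automatically controlled by the invariance property of Proposition \ref{repmatricial} together with $\mathbf{ad}$-invariance of every value of $l$). I would be careful to separate "depends only on the fiber parameter $g$" (the definition of $\lambda$-natural with respect to $\alpha$) from "constant map" (the definition of $\lambda$-natural), and to note that here the two coincide on the relevant class because the fiber variable $g$ does not actually appear in $^{\lambda}h$. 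Everything else—horizontal/vertical decomposition, $\pi\circ R_h=\pi$, $\omega$ reproducing $v$ on its chosen vertical lift—is routine verification from the definitions of the connection $\omega$ and of the maps $e_i$.
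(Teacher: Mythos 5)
Your proof is correct and follows essentially the same route as the paper: compute the block form of $^{\lambda}h$ (identity block, zero off-diagonal blocks, lower-right block given by $l\circ\pi$ in the basis $v$) and read off that dependence only on the fiber parameter is equivalent to $l$ being constant. The only cosmetic difference is that the paper invokes the uniqueness (up to scalar) of the $\mathbf{ad}$-invariant metric at the outset to write the lower-right block as $f(\pi(q))\,Id_{k\times k}$, whereas you defer it to the forward implication, where it is indeed the one substantive point needed to kill the $v$-dependence.
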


\begin{proof}{{Proof.}} By definition $^{\lambda}h(q,u,v,g)$ is the matrix of $h(q.g)$ with respect to de base
$\{e_i(q,u,v,g)$ $,e_{n+i}(q,u,v,g)\}$.
For $1\leq i,j \leq n$,  we have that:
$$h(q.g)(e_i(q,u,v,g),e_j(q,u,v,g))=g(u_i,u_j)+0=\delta_{ij}$$
For $1\leq i \leq n$ and $1\leq j \leq k$: $$h(qg)(e_i(q,u,v,g),e_{n+j}(q,u,v,g))=0=h(qg)(e_{n+j}(q,u,v,g),e_i(q,u,v,g))$$
and for $1\leq i,j \leq k$: $$h(q.g)(e_{n+i}(q,u,v,g),e_{n+j}(q,u,v,g))=
l\circ \pi(qg)(v_i,v_j)=f(\pi(q)).\delta_{ij}$$
because $\mathfrak{g}$ has essentially one $\textbf{ad}-invariant\ metric$. Since $$^{\lambda}h(q,u,v,g)=\pmatrix{Id_{n\times n} & 0 \cr 0 &
f(\pi(q)).Id_{k\times k}}$$
$h$ is $\lambda-natural$ with respect to $\alpha$  if and only if $f$ is a constant map, that is to say that $h$  is a bundle metric.
\end{proof}

\begin{rem} If  $\mathfrak{g}$ has different $\textbf{ad}-invariant$ metrics, and $h$ is a metric of the type of (\ref{bundlemetric}),  then $^{\lambda}h:N\longrightarrow\R^{(n+k)\times(n+k)}$ only depends of the parameter of $G$ if $l=\delta.l_0$ with $\delta$ a constant. In general, the metrics of type (\ref{bundlemetric})
that are $\lambda-natural$ with respect to $\alpha$ are the bundle metrics induced by the $\textbf{ad}-invariant\ metric$ $l_0$.
\end{rem}

\begin{rem} The s-space $\lambda$ depends of  the metric $l_0$ and of the connection $\omega$. Let $\omega'$ be another connection on $\alpha$ and consider the s-space $\lambda'$ induced by it.  The difference between the connection are the horizontal subspaces that each one determine and the difference between $\lambda^{\omega}$ and $\lambda^{\omega'}$ are the maps $e_i:N\longrightarrow TP$ and $e_i':N\longrightarrow TP$. Let
$A(p,u,v,g)=\pmatrix{a_1(p,u,v,g) &  a_2(p,u,v,g) \cr
a_4(p,u,v,g) & a_3(p,u,v,g)}\in GL(n+k)$ be the matricial map that satisfies
$\{e_i',e_{n+j}'\}=\{e_i,e_{n+j}\}.A$ where $a_1(p,u,v,g)\in R^{n\times n}$, $a_2(p,u,v,g)\in
\R^{n\times k}$, $a_3(p,u,v,g)\in \R^{k\times k}$ and $a_4(p,u,v,g)\in \R^{k \times n}$. Since $e_{n+j}(p,u,v,g)=e'_{n+j}(p,u,v,g)$, we have that $a_2\equiv 0$ and $a_3\equiv Id_{k\times k}$. If $T$ is a tensor, then

$^{\lambda^{\omega'}}T(p,u,v,g)=\pmatrix{a_1^t(p,u,v,g) &
a_4^t(p,u,v,g) \cr 0 &
Id_{k\times k}}.^{\lambda^{\omega}}T(p,u,v,g).\pmatrix{a_1(p,u,v,g) & 0 \cr
a_4(p,u,v,g) & Id_{k\times k}}$

Suppose as in the proposition above that there is essentially one $\textbf{ad}-invariant $ metric. Then if $h$ is a  metric of type (
\ref{bundlemetric}) we have that

$^{\lambda^{\omega'}}h(p,u,v,g)=$
$$\pmatrix{a_1^t(p,u,v,g)a_1(p,u,v,g)+
f(\pi(p))a_4^t(p,u,v,g).a_4(p,u,v,g) &
f(\pi(p)).a^t_4(p,u,v,g) \cr
f(\pi(p))a_4(p,u,v,g) &
f(\pi(p)).Id_{k\times k}}$$

Therefore, if the connections satisfy that $a_1\in O(n)$ and $a_4$ is a constant map, then $h$ is $\lambda-natural$ with respect to $\alpha$ if and only if $h$ is $\lambda'-natural$ with respect to $\alpha$.  In this situation $h$ is a bundle metric.
\end{rem}

\vspace{0.5 cm}

%{\bf{Acknowledgements\ \ }} The author would like to thanks to Professors Masami Sekizawa, Jimmy Petean and Guillermo Keilhauer for read the %manuscript and for their helpful and useful suggestions.

%%%%%%%%%%%%%%%%%%%%%%%%%%%%%%%%%%%%%%%%%%%%%%%%%%%%
%%%%%%%%%%%%%%%%%%%%%%%%%%%%%%%%%%%%%%%%%%%%%%%%%%%%
%%%%%%%%%%%%%%%%%%%%%%%%%%%%%%%%%%%%%%%%%%%%%%%%%%%%

%%%%%%%%%%%%%%%%%%%%%%%%%%%%%%%%%%%%%%%%%%%%%
%%%%%%%%%%%%%%%%%%%%%%%%%%%%%%%%%%%%%
\vspace{1 cm}

\noindent{\sc Guillermo Henry  \\
Departamento de Matem\'atica,  FCEyN, Universidad de Buenos Aires\\
 Ciudad Universitaria, Pabell\'on I,  Buenos Aires, C1428EHA, Argentina  }\\
{\it e-mail address}: ghenry@dm.uba.ar  \\

\end{document}